\newtheorem{theorem}{Theorem}
\newtheorem{lemma}[theorem]{Lemma}
\newtheorem{proposition}[theorem]{Proposition}
\theoremstyle{remark}
\newtheorem{remark}{Remark}
\theoremstyle{definition}
\newtheorem{example}{Example}
\newtheorem{problem}{Problem}
\newcommand{\CC}{\mathbb C}
\newcommand{\RR}{\mathbb R}
\begin{document}

\title[Tangential holomorphic vector fields]{On the tangential holomorphic
vector fields vanishing at an infinite type point}

\author{Kang-Tae Kim \and Ninh Van Thu}

\thanks{The research of the authors was supported in part by an NRF grant
2011-0030044 (SRC-GAIA) of the Ministry of Education, The Republic of Korea.}

\address{Department of Mathematics and Center for Geometry and its Applications,
Pohang University of Science and Technology,  Pohang 790-784, Republic of Korea}
\email{kimkt@postech.ac.kr, thunv@vnu.edu.vn, thunv@postech.ac.kr}

\address{(Permanent Address of Ninh Van Thu) Department of Mathematics, Vietnam
National University at Hanoi, 334 Nguyen Trai str., Hanoi, Vietnam}
\email{}

\subjclass[2000]{Primary 32M05; Secondary 32H02, 32H50, 32T25.}
\keywords{Holomorphic vector field, real hypersurface, infinite type point.}
\maketitle

\begin{abstract}
Let $(M,p)$ be a $\mathcal C^\infty$ smooth non-Leviflat CR hypersurface germ  in
$\mathbb C^2$ where $p$ is of infinite type. The purpose of this article is to investigate the
holomorphic vector fields tangent to $(M,p)$ vanishing at $p$.
\end{abstract}

\section{Introduction}
A {\it holomorphic vector field} in $\mathbb C^n$ takes the form
$$
X = \sum_{k=1}^n h_k (z) \frac{\partial}{\partial z_k}
$$
for some functions $h_1, \ldots, h_n$ holomorphic in $z=(z_1, \ldots, z_n)$.  A smooth real
hypersurface germ $M$ (of real codimension 1) at $p$ in $\mathbb C^n$ takes a defining
function,
say $\rho$, such that $M$ is represented by the equation $\rho(z)=0$.  The holomorphic vector
field $X$ is said to be {\it tangent} to $M$ if its real part $\hbox{Re }X$ is tangent to $M$, i.e.,
$X$ satisfies the equation $\hbox{Re }X\rho = 0$.

In several complex variables, such tangential holomorphic vector fields arise naturally from the
action by the automorphism group of a domain. If $\Omega$ is a smoothly bounded domain in
$\CC^n$ and if its automorphism group $\hbox{Aut }(\Omega)$ contains a 1-parameter subgroup,
say $\{\varphi_t\}$, then the $t$-derivative generates a holomorphic vector field.  In case the
automorphisms of $\Omega$ extend across the boundary (cf., \cite{Fef}, \cite{Bell-Lig}),
the vector field generated as such
becomes a holomorphic vector field tangent to the boundary hypersurface $\partial\Omega$.
Even such a rough exposition illustrates already that the study of such vector fields is closely
linked with the study of the automorphism group of $\Omega$, an important research
subject in complex geometry.

Over the decades, the domains admitting such automorphism groups with a boundary accumulating 
orbit have been studied extensively by many authors. To take only a few examples, 
well-known theorems such as the Wong-Rosay theorem \cite{W, R}, the Bedford-Pinchuk theorems
\cite{B-P1, B-P2, B-P3} and the theorems characterizing the bidisc by
Kim, Pagano, Krantz and Spiro \cite{Kim, K-P, K-K-S} gave characterization of the bounded
domain with non-compact automorphism group among many theorems in this circle of research.  All these
theorems rely upon the existence of an orbit of an interior point by the action of the
automorphism group accumulating at a pseudoconvex boundary point, strongly pseudoconvex,
of D'Angelo finite type \cite{D}, or of Levi flat in a neighborhood, respectively. For the complementary cases, Greene and Krantz posed a conjecture that for a smoothly
bounded pseudoconvex domain admitting a non-compact automorphism group, the point orbits can
accumulate only at a point of finite type \cite{GK}.

In the case that the automorphism group extends to a subgroup of the diffeomorphism group of the closure and that the automorphism group of a bounded domain has a nontrivial connected subgroup whose point orbit accumulates at a boundary point, it produces an action on the boundary surface by  a nontrivial tangential holomorphic vector field vanishing at the boundary accumulation point.
Analysis of such vector fields has turned out to be quite essential: cf., e.g., \cite{B-P1, B-P2, B-P3} in which the existence of parabolic vector fields plays an important role.  In case the vector field is contracting at a $\mathcal C^\infty$ smooth boundary point, a theorem of Kim and Yoccoz \cite{KiYo} implies that the boundary point is of finite type, thus solving an important case of the Greene-Krantz conjecture mentioned above.  Therefore, the following problem emerges naturally:

\begin{problem}
 Assume that $(M,p)$ is a non-Leviflat CR hypersurface germ in $\mathbb C^n$ such that $p$ is a point of infinite type.  Characterize all holomorphic vector fields tangent to $M$ vanishing at $p$.
\end{problem}

A typical consequence of the main results of this paper is as follows:

\begin{theorem}  \label{T0}
Let $(M,0)$ be a pseudoconvex $\mathcal C^\infty$ CR hypersurface germ in $\mathbb C^2$
defined by $\hbox{Re }z_1 + P(z_2) = 0$, where $P(z_2)$ satisfies:
\begin{itemize}
\item[(1)] $P(z_2)$ vanishes to infinite order at $z_2=0$,
\item[(2)] $P(z_2) >0$ for any $z_2 \not= 0$.
\end{itemize}
If $X$ is a holomorphic tangent vector field to $(M,0)$ vanishing at $0$, then
$X$ is either identically zero, or $X=i\alpha z_2 \partial/\partial z_2$ with
$\alpha$ a nonzero real constant, in which case $P(z_2) = P(|z_2|)$.
\end{theorem}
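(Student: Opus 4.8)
The plan is to convert the tangency condition into a functional equation, promote it to a holomorphic identity in $z_1$, and then mine it in two stages: first using the infinite-order vanishing of $P$, and afterwards using the positivity $P>0$. Write $X=h_1(z_1,z_2)\,\partial/\partial z_1+h_2(z_1,z_2)\,\partial/\partial z_2$ with $h_1,h_2$ holomorphic and $h_1(0)=h_2(0)=0$. Taking $\rho=\operatorname{Re}z_1+P$, so that $\rho_{z_1}=\tfrac12$ and $\rho_{z_2}=P_{z_2}$, the requirement $\operatorname{Re}(X\rho)=0$ on $M$ becomes
\begin{equation*}
\tfrac12\big(h_1+\overline{h_1}\big)+h_2P_{z_2}+\overline{h_2}\,P_{\bar z_2}=0\qquad\text{on }M. \tag{$*$}
\end{equation*}
On $M$ we have $\bar z_1=-2P-z_1$. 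For each fixed $z_2$, after this substitution the left side of $(*)$ is holomorphic in $z_1$ and vanishes on the line $\{\operatorname{Re}z_1=-P\}$, hence by the identity theorem it vanishes for all small $z_1$. Writing $\overline{h_j}(\zeta,w):=\overline{h_j(\bar\zeta,\bar w)}$, this yields the stronger identity
\begin{equation*}
\tfrac12h_1(z_1,z_2)+\tfrac12\,\overline{h_1}(-2P-z_1,\bar z_2)+h_2(z_1,z_2)P_{z_2}+\overline{h_2}(-2P-z_1,\bar z_2)P_{\bar z_2}=0 \tag{$**$}
\end{equation*}
valid for all small $(z_1,z_2)$.

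Next I would use that $p$ is of infinite type, so every partial derivative of $P$ vanishes at $0$; consequently every term of $(**)$ carrying a factor of $P$ or of a derivative of $P$ is flat in $(z_2,\bar z_2)$. Passing to the full Taylor expansion of $(**)$ at $z_2=0$ therefore annihilates the last two terms and turns $-2P$ into $0$ in the second, leaving the formal identity $h_1(z_1,z_2)+\overline{h_1}(-z_1,\bar z_2)=0$. Since the first summand involves only $z_2$ and the second only $\bar z_2$, matching coefficients forces $h_1$ to be independent of $z_2$, say $h_1=h_1(z_1)=\sum_{n\ge1}c_nz_1^n$, subject to $c_n+(-1)^n\overline{c_n}=0$; in particular $\overline{h_1}(-z_1)=-h_1(z_1)$. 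Feeding this back into $(**)$ gives the reduced identity
\begin{equation*}
\tfrac12\big(h_1(z_1)-h_1(z_1+2P)\big)+h_2(z_1,z_2)P_{z_2}+\overline{h_2}(-2P-z_1,\bar z_2)P_{\bar z_2}=0. \tag{A}
\end{equation*}

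The hard part will be to squeeze $h_1\equiv0$ and $h_2(z_1,z_2)=i\alpha z_2$ out of (A). Because $P$ is merely smooth and flat, analyticity in $z_2$ is unavailable, so power-series matching no longer helps and one must instead compare the orders of flatness of the competing terms while exploiting $P>0$. Setting $z_1=0$ in (A) and expanding to first order in $P$ produces a relation of the form $c_1P=2\operatorname{Re}\big(h_2(0,z_2)P_{z_2}\big)+(\text{higher-order flat terms})$; the plan is to show, using $P>0$ on a punctured neighborhood together with the comparison between the sizes of $P$ and its gradient, that $c_1=0$, and then to iterate—differentiating (A) in $z_1$ and peeling off successive coefficients—to kill every $c_n$, to remove the $z_1$-dependence of $h_2$, and to force $h_2$ down to a single linear term $h_2=\beta z_2$. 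I expect this flatness-order bookkeeping, carried out without the convenience of coefficient comparison, to be the principal obstacle.

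Once $h_1\equiv0$ and $h_2=\beta z_2$, the conclusion is clean and makes the role of positivity transparent. With $h_1\equiv0$ the flow of $\operatorname{Re}X$ fixes $z_1$ and acts by $z_2\mapsto e^{\beta s}z_2$, so tangency forces $P(e^{\beta s}z_2)=P(z_2)$ for all $s$. If $\operatorname{Re}\beta\neq0$ the orbit spirals into the origin as $s\to\mp\infty$, whence $P(z_2)=P(0)=0$, contradicting $P(z_2)>0$ for $z_2\neq0$; therefore $\beta=i\alpha$ with $\alpha$ real. Invariance under the rotations $z_2\mapsto e^{i\alpha s}z_2$ is then exactly the condition $z_2P_{z_2}=\bar z_2P_{\bar z_2}$, i.e. $\partial_\theta P=0$, which says $P(z_2)=P(|z_2|)$; and nontriviality $X\not\equiv0$ gives $\alpha\neq0$, completing the classification.
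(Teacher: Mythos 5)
Your setup is correct and in places cleaner than the paper's. The polarization step (substituting $\bar z_1=-2P-z_1$ and invoking the identity theorem in $z_1$), the formal Taylor expansion in $(z_2,\bar z_2)$ that uses flatness of $P$ to force $h_1=h_1(z_1)$ with $\overline{h_1}(-w)=-h_1(w)$, and the closing flow argument deriving $\operatorname{Re}\beta=0$ and $P(z_2)=P(|z_2|)$ from positivity are all valid. The paper instead parametrizes $M$ by $(it-P(z_2)-tQ,z_2)$, sets $t=\alpha P(z_2)$, and works with the full double Taylor series of $h_1$, never isolating $h_1$ as a function of $z_1$ alone; so your reduction to identity (A) is a genuinely different and arguably tidier route to that intermediate point.

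However, there is a genuine gap, and it sits exactly where you say you ``expect the principal obstacle'': everything between (A) and the conclusion $h_1\equiv 0$, $h_2=\beta z_2$ is missing. From (A) with $z_1=0$ you must rule out relations of the shape $\operatorname{Re}\bigl[a+b\,z_2P'(z_2)/P(z_2)\bigr]\to 0$, and, after dividing by higher powers of $P$ to peel off the successive $c_n$ and the Taylor coefficients $b_{mn}$ of $h_2$, relations of the shape $\operatorname{Re}\bigl[az^m+P^{-n}\bigl(bz^\ell P'/P+g_1\bigr)\bigr]=g_2$ and $\operatorname{Re}(bz^kP')=0$. These are precisely the technical lemmas of Section 4.1 of the paper, and they constitute its analytic heart: they are proved not by comparing the sizes of $P$ and its gradient pointwise, but by integrating the putative identity along carefully chosen curves ($re^{i\beta t}$, $e^{bt}$, $(c-kbt)^{-1/k}$, including a rotation trick over the $k$-th roots of unity) and contradicting either $\log P\to-\infty$ or the infinite-order vanishing of $P$. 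Your proposed mechanism --- ``the comparison between the sizes of $P$ and its gradient'' --- is exactly what the paper warns cannot work: it exhibits a flat, positive $P$ for which $P'/P$ does \emph{not} tend to infinity as $z\to 0$, so no pointwise lower bound on $|P'/P|$ is available and the ``flatness-order bookkeeping'' you describe cannot be carried out in the naive way. Until these non-existence lemmas (or substitutes for them) are supplied, the proof is incomplete.
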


The defining function of a general CR hypersurface germ $M$, say, at $0$ even in
complex dimension 2 is more complicated.  Let $(M,0)$ be a CR hypersurface germ
at the origin $0$ in $\CC^2$ where $0$ is of infinite type.  If one writes $z_1 = u+iv$, then $M$ takes a defining function equation
$$
\rho (z) = u + P(z_2) + v Q(z_2, v) = 0
$$
from the Taylor expansion (of $u$) in the variable $v$. Despite its general feature, the theorem above is clearly just a special case of the main result of this article; we indeed present the complete list of tangential holomorphic vector fields vanishing at $0$ for much broader a class of CR hypersurfaces in $\CC^2$.

Before going in further we acknowledge that this work has been heavily influenced by many papers preceding ours.  Some of them, in addition to the ones cited already, include \cite{Ber, By, By1, GK, IK, Ka, Ki1, Ki2, Ki3, La, Ninh}, just to name a few.
We point out that the results of this paper encompass almost all cases in the literature [{\it Op.\ cit.}] and in fact more general.

\section{Main results of this paper}

For the sake of smooth exposition, we would like to explain the main results of
this article, deferring the proof to the later sections.
\smallskip

Let $M$ be a $\mathcal{C}^\infty$-smooth real hypersurface germ at the origin $0=(0,0)$ in
$\mathbb C^2$.  Then it admits the following expression:
\begin{equation} \label{eq}
M= \{(z_1,z_2)\in \mathbb C^2: \rho(z_1,z_2)
=\text{Re }z_1+P(z_2)+
\text{Im }z_1Q(z_2,\text{Im }z_1)=0\},
\end{equation}
where $P$ and $Q$ are $C^\infty$-smooth functions with $P(0)=0, dP(0)=0$ and $Q(0,0)=0$.
We now  discuss what the concept of infinite type means.
\medskip

Following \cite{D}, we consider a smooth real-valued function $f$ defined in a neighborhood of
$0$ in $\mathbb C$.  Let $\nu(f)$ denote the order of vanishing of $f$ at $0$, by the first
nonvanishing degree term in its Taylor expansion at $0$.  Order 1 vanishing simply means $f(0)=0$,
but the first degree term is not identically zero, for instance.

In case $f$ is a mapping into $\RR^k$, $k > 1$, we consider the order of vanishing of all the
components and take the smallest one among them for the vanishing order of $f$.  Denote it by
$\nu_0 (f)$. Also denote by $\Delta = \{z \in \CC \colon |z|<1\}$. Then the origin is called a {\it
point of infinite type} if, for every integer $\ell > 0$, there exists a holomorphic map
$h:\Delta \to \CC^2$ with $h(0)=(0,0)$ such that
$$
\nu_0 (h) \not=\infty \hbox{ and } \frac{\nu_0 (\rho \circ h)}{\nu_0(h)} > \ell.
$$
Notice that the terminology ``infinite type'' coincides with ``not of D'Angelo finite type'', since
the definition of $0$ being a point of $M$ of D'Angelo finite type is  that the supremum of
$\nu_0 (\rho \circ h)/\nu_0(h)$ over all possible analytic curves $h$ is bounded.  If we just
call this supremum the D'Angelo type of $M$ at $0$, denoted by $\tau(M,0)$, then the
definition of infinite type is simply that $\tau(M,0)=\infty$.
\medskip

Then the following result pertaining to the infinite type is our first result of this article:

\begin{theorem}\label{T1}
Suppose that $M$ is a smooth real hypersurface germ in $\CC^2$ at the origin defined by
$$
\rho(z_1,z_2)=\text{Re }z_1+P(z_2)+ \text{Im }z_1Q(z_2,\text{Im }z_1)=0.
$$
Assume also that  $\displaystyle{\frac{\partial^N P}{\partial z_2^N}\Big|_{z_2=0} =0}$
for  every nonnegative integer $N$. Then the origin is a point of infinite type
if and only if  $P(z_2)$ vanishes to infinite order at $z_2=0$.
\end{theorem}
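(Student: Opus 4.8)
\smallskip

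The statement is an equivalence, so I would prove the two implications separately, the forward one being immediate and the converse carrying the essential content. For the direction asserting that infinite vanishing of $P$ forces infinite type, I would simply test the type condition on the analytic disc $h(\zeta)=(0,\zeta)$. Then $\nu_0(h)=1$, while $\rho\circ h(\zeta)=P(\zeta)$ has identically vanishing Taylor series at $0$, so $\nu_0(\rho\circ h)=\infty$; hence $\nu_0(\rho\circ h)/\nu_0(h)$ exceeds every integer $\ell$, and $0$ is of infinite type.

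For the converse I would argue the contrapositive: assuming $P$ does \emph{not} vanish to infinite order, I would show $\tau(M,0)<\infty$. First I extract the structure of $P$ near $0$. Writing its Taylor expansion $P(z_2)\sim\sum_{a,b}c_{ab}z_2^a\bar z_2^b$, reality of $P$ gives $\overline{c_{ab}}=c_{ba}$, while the hypothesis $\partial^N P/\partial z_2^N(0)=0$ forces $c_{a0}=0$ for all $a$, and hence $c_{0b}=0$ as well. Therefore the lowest nonzero homogeneous part $P_m$, of some degree $m\ge 2$, is real and consists only of genuinely mixed monomials $z_2^a\bar z_2^b$ with $a,b\ge 1$; in particular the pure powers $z_2^m$ and $\bar z_2^m$ are absent. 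This absence is the decisive consequence of the hypothesis.

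The crux is to estimate $\nu_0(\rho\circ h)$ for an arbitrary disc germ $h=(h_1,h_2)$ with $h(0)=0$ and $\nu_0(h)<\infty$; put $k=\nu_0(h_1)$ and $l=\nu_0(h_2)$. Substituting $h_2=b_l\zeta^l+\cdots$ into $P_m$ and passing to polar coordinates $\zeta=re^{i\theta}$, the term of $P(h_2)$ of radial order $ml$ equals $r^{ml}$ times a trigonometric polynomial $g(\theta)$ whose frequencies are the numbers $l(a-b)$ with $a+b=m$ and $c_{ab}\ne 0$; since pure powers are absent we have $|a-b|\le m-2$, so every such frequency has modulus at most $(m-2)l<ml$. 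A short Laurent-polynomial argument, restricting $P_m$ to $|z_2|=1$, shows $g\not\equiv 0$, whence $\nu_0(P(h_2))=ml$ exactly. On the other hand $\text{Re}\,h_1$ contributes at radial order $ml$ only the single angular frequency $\pm ml$, which cannot match any frequency of $g$; hence $\text{Re}\,h_1$ cannot cancel the leading term of $P(h_2)$.

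Finally I would absorb the term $\text{Im}\,h_1\cdot Q(h_2,\text{Im}\,h_1)$: because $Q(0,0)=0$ and $\text{Im}\,h_1$ vanishes to order $k$, this term vanishes to order at least $k+1$, so it never affects the lowest-order behavior isolated above. A short case analysis then finishes the proof. If $k<ml$ the order of $\rho\circ h$ is $k=\nu_0(h_1)$ and the quotient $\nu_0(\rho\circ h)/\nu_0(h)$ equals $k/\min(k,l)\le m$; if $k\ge ml$ then $\nu_0(\rho\circ h)=ml$ while $\nu_0(h)=l$, and the quotient equals $m$. In every case the quotient is bounded by $m$, so $\tau(M,0)\le m<\infty$ and $0$ is of finite type, completing the contrapositive. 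The main obstacle is exactly the no-cancellation step: one must guarantee that the harmonic function $\text{Re}\,h_1$, which carries a single angular frequency at each radial order, is frequency-incompatible with the leading term of $P(h_2)$, and it is precisely the vanishing of all holomorphic derivatives of $P$, equivalently the absence of pure powers in $P_m$, that produces this frequency gap.
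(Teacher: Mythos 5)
Your proof is correct and follows essentially the same route as the paper: the forward direction via the disc $(0,\zeta)$, and the converse by comparing the vanishing orders of $\operatorname{Re}h_1$ and $P\circ h_2$ and ruling out cancellation because the hypothesis $\partial^N P/\partial z_2^N\big|_0=0$ together with the reality of $P$ excludes harmonic terms from the leading homogeneous part of $P$ --- exactly the paper's Case 3 argument, phrased there as the impossibility of $\psi(s)=\operatorname{Re}(c\,s^k)$ and by you as an angular-frequency gap. Your contrapositive packaging yields the slightly sharper quantitative statement $\tau(M,0)=\nu(P)$ when the latter is finite, but the underlying mechanism is identical.
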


Notice that the condition that $\frac{\partial^N P}{\partial z_2^N}\Big|_{z_2=0}=0$ for
every positive integer $N$ for $P$ is not an artificial restriction.  In the viewpoint
of formal power series expansion of $P$ at the origin, this condition simply amounts to
that each homogeneous polynomial of homogeneous degree does not contain any harmonic terms.
This can be achieved
through a holomorphic change of the coordinate system at the origin.
\medskip

Then we present the following  characterization of holomorphic vector fields which
are tangent to a hypersurface and vanish at an infinite type point.

\begin{theorem}\label{T2}
If a hypersurface germ $(M,0)$ is defined by the equation
$\rho(z) := \rho(z_1,z_2)=\text{Re }z_1+P(z_2)+ (\text{Im }z_1)~ Q(z_2,
\text{Im }z_1)=0$,
satisfying the conditions:
\begin{itemize}
\item[(1)] $P(z_2)>0$ for any $z_2 \not= 0$, %
\item[(2)] $P$ vanishes to infinite order at $z_2=0$,
 and
\item[(3)] $\displaystyle{\frac{\partial^N Q(z_2,0)}{\partial z_2^N}\Big|_{z_2=0}=0}$ for every
positive integer $N$,
\end{itemize}
then any holomorphic vector field vanishing at the origin tangent to $(M,0)$ is either identically
zero, or of the form $i \beta z_2\frac{\partial}{\partial z_2}$ for some non-zero real number
$\beta$, in which case it holds that $\rho(z_1,z_2)=\rho(z_1,|z_2|)$.
\end{theorem}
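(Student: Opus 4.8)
The plan is to write the unknown field as $X = h_1(z)\,\partial/\partial z_1 + h_2(z)\,\partial/\partial z_2$ with $h_1,h_2$ holomorphic near $0$ and $h_1(0)=h_2(0)=0$, and to squeeze everything out of the tangency identity $\text{Re}(X\rho)=0$ on $M$. Writing $z_1=u+iv$ and using the Wirtinger calculus one computes
\[
\rho_{z_1}=\frac12-\frac{i}{2}\bigl(Q+vQ_v\bigr),\qquad \rho_{z_2}=P_{z_2}+vQ_{z_2},
\]
so that tangency reads
\[
\text{Re}\Bigl[h_1\bigl(1-i(Q+vQ_v)\bigr)+2h_2\bigl(P_{z_2}+vQ_{z_2}\bigr)\Bigr]=0
\]
as an identity on $M$, where $M$ is parametrized by substituting $u=-P(z_2)-vQ(z_2,v)$. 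I would view the left-hand side as a real-analytic-plus-flat function of the real parameters $(z_2,\bar z_2,v)$ and exploit that $P$, hence $P_{z_2}$, is flat at $z_2=0$ (hypothesis (2)) and that $Q(z_2,0)$ is flat at $z_2=0$ (hypothesis (3)). The basic mechanism I will use repeatedly is that if a real-analytic function equals a function that is flat at $0$, then the real-analytic function is flat, hence identically zero near $0$.

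Stage I (showing $h_1\equiv 0$). First restrict the identity to $v=0$, where $z_1=-P(z_2)$. Since $h_i(-P(z_2),z_2)=h_i(0,z_2)+(\text{flat})$ because every positive $z_1$-power carries a factor of the flat function $P$, and since $Q(z_2,0)$ and $P_{z_2}$ are flat, the identity collapses to $\text{Re}\,h_1(0,z_2)=(\text{flat in }z_2)$. As $\text{Re}\,h_1(0,z_2)$ is real-analytic, the mechanism above forces it to vanish identically; a holomorphic function of $z_2$ with zero real part is constant, and $h_1(0,0)=0$ gives $h_1(0,z_2)\equiv 0$, so $z_1$ divides $h_1$. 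I would then reinsert this into the full identity and induct on the order in $v$ (successively differentiating at $v=0$), at each step isolating the genuine real-analytic part from the flat remainder, to conclude that all Taylor coefficients of $h_1$ vanish, i.e. $h_1\equiv 0$.

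Stage II (determining $h_2$). With $h_1\equiv0$ the identity reduces to $\text{Re}\bigl[h_2(z_1,z_2)(P_{z_2}+vQ_{z_2})\bigr]=0$ on $M$. Now the $v=0$ slice is vacuous (it reads $0=\text{flat}$), so positivity (hypothesis (1)) must enter: because $P>0$ away from $0$ with a strict flat minimum at $0$, the level sets $\{P(z_2)+vQ(z_2,v)=-u\}$ are, for small $u<0$, smooth closed curves encircling $z_2=0$, and tangency says the real flow of $X$ preserves each of them. Analysing $\text{Re}(h_2\,\rho_{z_2})=0$ along these curves and again discarding flat obstructions, I expect to force $h_2$ to be independent of $z_1$ and linear, $h_2=i\beta z_2$, with $\beta$ real — a nonzero real part of the coefficient would make the flow contract or expand the level curves toward or away from $0$, contradicting their invariance together with $P>0$ and the flatness of $P$.

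Stage III (radial symmetry) and the obstacle. The field $i\beta z_2\,\partial/\partial z_2$ integrates to $(z_1,z_2)\mapsto(z_1,e^{i\beta t}z_2)$; tangency makes $M$ invariant under it, so $P(e^{i\beta t}z_2)=P(z_2)$ and $Q(e^{i\beta t}z_2,v)=Q(z_2,v)$ for all $t$, and since $\beta\neq0$ this is full rotational invariance, whence $P(z_2)=P(|z_2|)$ and $\rho(z_1,z_2)=\rho(z_1,|z_2|)$. The main obstacle is the bookkeeping in Stages I and II: hypothesis (3) controls $Q$ only on $v=0$, so the $v$-derivatives of $Q$ are a priori non-flat and can couple to the non-flat parts of $h_1,h_2$. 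The crux is to organise the identity so that, order by order in $v$, the real-analytic (polynomial-in-$z_2,\bar z_2$) contributions separate cleanly from the flat ones; this is precisely where the combination of flatness, positivity, and the identity-principle argument must be applied with care.
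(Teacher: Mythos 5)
Your skeleton (first kill $h_1$, then pin down $h_2$, then integrate $i\beta z_2\,\partial/\partial z_2$ to get rotational symmetry) matches the paper's, but the engine you propose --- ``real-analytic $=$ flat $\Rightarrow$ zero'', applied order by order in $v$ --- cannot carry the argument. Two concrete problems. First, you misread hypothesis (3): it kills only the pure monomials $z_2^N$ (harmonic terms) in $Q(z_2,0)$, not its whole Taylor series, so $Q(z_2,0)$ need not be flat (e.g.\ $Q(z_2,0)=|z_2|^2$ is allowed); also $P,Q$ are only $\mathcal C^\infty$, so there is no genuinely real-analytic side to compare against. Your very first step still survives by a lowest-order-jet argument (since $Q(0,0)=0$), giving $h_1(0,z_2)\equiv 0$, i.e.\ $z_1\mid h_1$. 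But from that point on every remaining contribution of $h_1$ to the tangency identity carries a factor $z_1^j=O(P(z_2)^j)$ on $M$, hence is itself flat in $z_2$, and the terms it must balance, $b_{mn}z_2^nP'(z_2)P(z_2)^m$, are flat as well. No induction on Taylor order in $v$ or $z_2$ can separate flat from flat: for $H=z_1\,\partial/\partial z_1$ the tangency defect on the slice $v=0$ is exactly $-\tfrac12P(z_2)$, which is flat but nonzero, so every finite jet of your identity is satisfied by a field that is not tangent. Excluding such terms requires comparing actual rates of decay, i.e.\ controlling expressions like $z_2^{k-n}P'(z_2)/P(z_2)^{j-m}$ along curves approaching $0$; this is the entire analytic content of the paper (Lemmas~\ref{l1}, \ref{l22}, \ref{l3} and \ref{l7}, proved by integrating the identity along curves $e^{bt}$ and $(c-kbt)^{-1/k}$ and exploiting $\log P\to-\infty$ together with the infinite-order vanishing of $P$). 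The paper's example following Lemma~\ref{l22} shows that even $P'/P\to\infty$ can fail, so there is no soft way around these lemmas.

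Your Stage II also discards the main tool: at $v=0$ the reduced identity is not ``$0=\text{flat}$'' but the exact relation $\text{Re}\bigl[h_2(-P(z_2),z_2)\,P'(z_2)\bigr]=0$ on a punctured neighborhood, and it is precisely this identity, fed into Lemma~\ref{l7} together with the elementary Lemmas~\ref{l6} and \ref{l8} on $\text{Re}(izR_z)=0$, that forces all $b_{mn}$ to vanish except $b_{01}=i\beta$ and simultaneously yields $P(z_2)=P(|z_2|)$. The level-curve heuristic you substitute for it is not a proof: nothing in the hypotheses makes the level sets of $P$ near $0$ single smooth closed curves (no nondegeneracy is assumed), and even granting their invariance you would still need an argument to force $h_2$ to be linear and independent of $z_1$. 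Stage III is fine once $h_2=i\beta z_2$ is established.
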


Note that Theorem \ref{T2} implies Theorem \ref{T0}.

\begin{remark}
It is worth noting that the conclusion of Theorem \ref{T2} says that there are no hyperbolic or parabolic orbits of CR automorphisms of $(M,0)$ accumulating at $0$. This is seen by working out the necessary analytic differential equation associated with the vector field $H$.
\end{remark}

\begin{remark}
As to the hypothesis of the theorem, the condition (1) is not unnatural; this condition holds for instance, up to a change of the holomorphic coordinate system, if $(M,0)$ admits a holomorphic peak function at $0$. Condition (2) simply says that $0$ is a point of infinite type. The last condition (3) is the only technical condition but is essential for the conclusion of the theorem. Of course a holomorphic change of coordinates can remove the harmonic terms from $Q(z_2, 0)$, but then the new remaining term does no longer possess the factor $\text{\rm Im }z_2$. In such a case, we show by the example below that, without the condition (3), the conclusion of the theorem does not hold. On the other hand, the condition (3) is used only once in the proof, i.e., in Section 4.2. There, we need only that $Q(z_2,0)$ does not contain the monomial term $z_2^k$, in case $k=\nu(Q(z_2,0))$, finite.
\end{remark}

\begin{remark}[The notation $P'$]
Taking the risk of confusion we employ the notation
$$
P'(z_2) = P_{z_2} (z_2) = \frac{\partial P}{\partial z_2} (z_2)
$$
throughout the paper.  Of course for a function of single real variable $f(t)$, we shall continue using $f'(t)$ for its derivative, as well.
\end{remark}

\begin{example} We now demonstrate that there exists a hypersurface germ $(M,0)$ satisfying
the hypotheses of Theorem \ref{T2} except the condition (3), which admits a nontrivial holomorphic tangent
vector field with both $\partial/\partial z_1$ and $\partial/\partial z_2$ present in the expression nontrivially.

Let $M$ be the real hypersurface in $\Delta^2\subset\mathbb C^2$ defined by
$$
M=\{(z_1,z_2)\in \Delta^2: \rho(z_1,z_2)=\text{Re}z_1+P(z_2)+(\text{Im }z_1 )Q(z_2)=0\},
$$
where $P$ and $Q$ are given as follows:
$$
Q(z_2)=\tan \big((\text{Im } z_2)^2\big)
$$
and
\begin{equation*}
P(z_2)=
\begin{cases}
\exp \big(-\frac{1}{|z_2|^2}+\frac{1}{2}\text{Im }(z^2_2)-\log |\cos ((\text{Im}z_2)^2)|\big)
& \text{if}\; 0<|z_2|<1\\
0 & \text{if} \; z_2=0.
\end{cases}
\end{equation*}
Define a holomorphic vector field $H$ by
$$
H=z_1z_2^2\frac{\partial}{\partial z_1}+iz_2\frac{\partial}{\partial z_2}.
$$
We claim that the holomorphic vector field $H$ is tangent to the hypersurface $M$. Indeed,
computation shows:
\begin{equation}\label{eq01}
\begin{split}
&\text{Re}\Big[i z_2Q_{z_2}(z_2)+\Big(\frac{i}{2}-\frac{Q(z_2)^2}{2i}\Big)z_2^2\Big]=0,\\
&\text{Re}\Big[iz_2 P_{z_2}(z_2) -\Big(\frac{1}{2}+\frac{Q(z_2)}{2i}\Big)z_2^2
P(z_2)\Big]=0.
\end{split}
\end{equation}
Moreover, $\rho_{z_1}(z_1,z_2)=\frac{1}{2}+\frac{Q(z_2)}{2i}$ and
 $\rho_{z_2}(z_1,z_2)=P'(z_2)+(\text{Im} z_1)Q_{z_2}(z_2)$. Therefore it follows by
(\ref{eq01}) that
\begin{equation}\label{eq02}
\begin{split}
\text{Re}H(\rho(z_1,z_2))&=\text{Re}\Big[\Big(\frac{1}{2}+\frac{Q(z_2)}{2i}\Big) z_1z_2^2
+(P'(z_2)+\big(\text{Im} z_1)Q_{z_2}(z_2)\big) i z_2\Big]\\
&= \text{Re}\Big[\Big(\frac{1}{2}+\frac{Q(z_2)}{2i}\Big) \Big(i (\text{Im}z_1)-
P(z_2)-(\text{Im}z_1) Q(z_2)\Big)z_2^2\\
&\qquad +\big(P'(z_2)+(\text{Im} z_1)Q_{z_2}(z_2)\big) i z_2\Big]\\
&=\text{Re}\Big[iz_2 P'(z_2) -\Big(\frac{1}{2}+\frac{Q(z_2)}{2i}\Big)z_2^2
P(z_2)\Big]\\
&\qquad +(\text{Im} z_1)\text{Re}\Big[iz_2 Q_{z_2}(z_2) +\Big(\frac{i}{2}-\frac{Q(z_2)^2}
{2i}\Big)z_2^2 \Big]\\
&=0
\end{split}
\end{equation}
 for every $(z_1,z_2)\in M$.  Hence the claim is justified.
\end{example}

\section{On the defining equations for the germs of infinite type}

From here on, the vanishing order is always computed at the origin.  Henceforth, the notation $\nu$ will
represent $\nu_0$, unless mentioned otherwise.

\subsection{Proof of Theorem \ref{T1}}
%
%
Assume that $P(z_2)$ vanishes to infinite order at $z_2=0$. Then define $\varphi$ to be
the holomorphic curve $\varphi(t)=(0,t)\colon\Delta \to \CC^2$, where $\Delta = \{z \in \CC \colon |z|<1\}$. Then $\nu(\rho\circ \varphi)=\nu(P)=+\infty$
and consequently,
$$
\tau(M,0)=\sup_{\varphi} \frac{\nu(\rho\circ \varphi)}{\nu(\varphi)}=+\infty.
$$
\smallskip

In order to establish the converse, suppose that $\tau(M,0)=+\infty$.
Then for each $N>1$ there is a holomorphic curve $\varphi_N:\Delta\to\mathbb C^2$
with $\varphi_N(0)=(0,0)$
such that
$$
\frac{\nu(\rho\circ \varphi_N)}{\nu(\varphi_N)}\geq N.
$$
The present goal is to show that $\nu(P)\geq N$.
\smallskip

For convenience, we use temporarily the notation
$$
\varphi_N(t) = (z_1(t), z_2(t))
$$
where $t$ is the complex variable.
Consider
\begin{equation}\label{eqt1}
\rho\circ\varphi_N(t)=\text{Re }z_1(t)+P(z_2(t))+\text{Im }z_1(t) Q(z_2(t),\text{Im }z_1(t)).
\end{equation}

The vanishing order of the third term of the right-hand side of (\ref{eqt1}) is strictly larger than
the first. Thus the third term does not have any role in the type consideration. Thus we consider the
following three cases:
\medskip

\bf Case 1.\ {\mathversion{bold}  $\bf \nu(P(z_2))<\nu(z_1)$}: \rm
If $\nu(z_1)>\nu(\varphi_N)$,
then $\nu(z_2)=\nu(\varphi_N)$. Hence $z_2\not \equiv 0$. Moreover
$$
N\leq \frac{\nu(\rho\circ \varphi_N)}{\nu(\varphi_N)}
=\frac{\nu(P(z_2))}{\nu(z_2)}=\nu(P),
$$
as desired.

The remaining subcase to consider is when $\nu(z_1)=\nu(\varphi_N)$.  In this case,
$\nu(z_2)\geq \nu(z_1)$, since
$\nu(\varphi_N(t)) = \min \{\nu(z_1(t)), \nu(z_2(t))\}$. In particular,
$z_1\not \equiv 0$. And, one obtains that
$$
N\leq \frac{\nu(\rho\circ \varphi_N)}{\nu(\varphi_N)} =
\frac{\nu(P(z_2))}{\nu(z_1)}<\frac{\nu(z_1)}{\nu(z_1)}=1.
$$
But this is absurd.  Hence our goal is justified in this case.
\medskip

\bf Case 2.\ {\mathversion{bold} $\bf \nu(P(z_2))> \nu(z_1)$}: \rm
If $\nu(z_2)\leq \nu(z_1)$,
then $\nu(z_2)=\nu(\varphi_N)$. Hence $z_2\not \equiv 0$, and
$$
N\leq \frac{\nu(\rho\circ \varphi_N)}{\nu(\varphi_N)}=\frac{\nu(z_1)}{\nu(\varphi_N)}
<\frac{\nu(P(z_2))}{\nu(z_2)}=\nu(P),
$$
as desired.

The remaining subcase, now, is when $\nu(z_2)>\nu(z_1)$. In this case
$\nu(\varphi_N)= \nu(z_1)$. Then $z_1\not \equiv 0$, and
$$
N\leq \frac{\nu(\rho\circ \varphi_N)}{\nu(\varphi_N)}=\frac{\nu(z_1)}{\nu(z_1)}=1,
$$
which is absurd. Hence the claim is prove in this case also.
\medskip

\bf Case 3.\ {\mathversion{bold} $\nu(P(z_2))=\nu(z_1)$}: \rm
If $\nu( \text{Re }z_1(t)+P(z_2(t)))=\nu(z_1(t))$,
then we also obtain $\nu(P)\geq N$ by repeating the arguments as above .

Thus the only remaining case is when $\nu( \text{Re }z_1(t)+P(z_2(t)))>\nu(z_1(t))$.
In such instance,  $z_1(t)\not \equiv 0$, $z_2(t)\not \equiv 0 $, and $\nu(P)<+\infty$. It follows
then that $z_1(t)=a_m t^m +o(t^m) $ and that $z_2(t)=b_n t^n +o(t^n) $,
where $m,n\geq 1, a_m\ne 0, b_n\ne 0$. Moreover we may also write $P(z_2)=\psi(z_2)+ ...$,
where $\psi$ is a nonzero real homogeneous polynomial of finite degree, say, $k$ with $k\geq 2$.
Since $\nu( \text{Re }z_1(t)+P(z_2(t)))>\nu(z_1(t))=\nu(P(z_2))$, one sees that
$m=nk$ and
$$
\text{Re }(a_m t^m)+\psi(b_n t^n)=0, $$
for every $t$ in a neighborhood of $0$ in $\mathbb C$. Letting $s=b_n t^n$,
we arrive at $\psi(s)=\text{Re }(\frac{a_m}{b_n^k} s^k)$.  But this is impossible since no finite
order jet of $P$ can contain any nonzero harmonic term.
\smallskip

Altogether, the proof of Theorem \ref{T1} is complete. \hfill $\Box$

\subsection{On the non-Leviflat hypersurface germs at $0$ of infinite type}
Unlike the finite type case, it has not very well been clarified in the case of
infinite type whether there is a variety that has infinite order contact with the hypersurface germ in
consideration.  We present a discussion concerning this point.  We begin with the following
which generalizes Lemma 2.2 of \cite{Kol}.

\begin{proposition}
If $\tau(M,0)=+\infty$, then there is a sequence
 $\{a_n\}_{n=2}^\infty \subset \mathbb C$ such that for each integer $N\geq 2$
such that the holomorphic curve $\varphi_N (t) = (z_1(t), z_2(t))$ defined by
$$
z_1(t)= -\sum_{j=2}^N a_j t^j,  \quad z_2(t)=t
$$
satisfies
$$
\nu(\rho\circ \varphi_N) \geq N.
$$
\end{proposition}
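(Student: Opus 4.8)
The plan is to produce a single formal power series $z_1(t)=-\sum_{j\ge 2}a_j t^j$ for which the restriction
$g(t):=\rho(z_1(t),t)=\text{Re }z_1(t)+P(t)+\text{Im }z_1(t)\,Q(t,\text{Im }z_1(t))$
vanishes to infinite order at $t=0$, and then to read off the statement by truncating at degree $N$. The guiding observation is that both $\text{Re }z_1(t)$ and $\text{Im }z_1(t)$ are harmonic in $t$, so the contribution of $z_1$ to any fixed homogeneous degree of $g$ consists only of the pure powers $t^d$ and $\bar t^d$; it can therefore cancel the harmonic part of the obstruction at that degree but nothing else.

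First I would expand $g$ into homogeneous parts $g=\sum_{d\ge 2}H_d(t,\bar t)$ and build the $a_j$ by induction on $d$. Having fixed $a_2,\dots,a_{d-1}$, the degree-$d$ part takes the shape $H_d=-\tfrac12\big(a_d t^d+\bar a_d\bar t^d\big)+G_d$, where $G_d$ is a real homogeneous polynomial of degree $d$ determined by $P$, by $Q$, and by the previously chosen coefficients (the $Q$-term contributes only through degrees $\le d-1$ of $\text{Im }z_1$, since $Q(0,0)=0$ forces it to start one degree higher). Choosing $a_d$ to be twice the $t^d$-coefficient of $G_d$ annihilates the harmonic part $c\,t^d+\bar c\,\bar t^d$ of $G_d$. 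What then survives in $H_d$ is precisely the non-harmonic (mixed) part $\sum_{p,q\ge 1,\,p+q=d}e_{pq}\,t^p\bar t^q$, which no choice of the $a_j$ can remove.

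The heart of the matter, and the step I expect to be the main obstacle, is to show that every such mixed part vanishes, and this is exactly where $\tau(M,0)=+\infty$ enters. I would argue by contradiction: if $d_0$ is the least degree at which the mixed part of $H_{d_0}$ is nonzero, I claim $\tau(M,0)$ must be finite. Writing $P=2\,\text{Re }h(z_2)+R(z_2,\bar z_2)$ with $h$ holomorphic and $R$ the non-harmonic remainder, one absorbs $h$ into $z_1$ and observes that along any analytic disc $\psi(t)=(w_1(t),w_2(t))$ the quantity $\rho\circ\psi$ is the real part of a holomorphic function of $t$, plus $R(w_2)$, plus the higher-order $Q$-term. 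Since a nonzero harmonic function and the mixed term $R(w_2)$ cannot cancel each other, a comparison of vanishing orders, after the standard reparametrization bringing $w_2$ to a pure power $t^m$ with $m=\nu(w_2)$, bounds the contact ratio $\nu(\rho\circ\psi)/\nu(\psi)$ by a constant determined by $d_0$. The delicate points are to verify that the $Q$-term is genuinely of higher order and cannot restore the cancellation (just as in the proof of Theorem \ref{T1}, where the third term plays no role in the type), and to rule out an accidental degeneration of the leading form of $R(w_2)$; this is precisely the two-dimensional fact that the D'Angelo type is governed by the non-harmonic part of $P$.

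Finally, granting that all mixed parts vanish, the induction determines the universal sequence $\{a_j\}_{j\ge 2}$ and renders $g$ formally zero to infinite order. For the truncated curve $\varphi_N=\big(-\sum_{j=2}^{N}a_j t^j,\,t\big)$ I would note that the discarded coefficients $a_j$ with $j>N$ influence $g$ only in degrees strictly larger than $N$, through $\text{Re }z_1$ at degree $j$ and through the $Q$-term at degree $\ge j+2$, so $\rho\circ\varphi_N$ agrees with $g$ in every degree $\le N$. Hence $\nu(\rho\circ\varphi_N)\ge N$, which is the assertion.
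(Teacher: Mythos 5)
Your overall architecture matches the paper's proof: the paper builds the same sequence $\{a_j\}$ by induction on the degree, except that it realizes the absorption of the harmonic part as a shear $\Phi_j(z_1,z_2)=(z_1+a_jz_2^j,\,z_2)$ of $\mathbb C^2$, so that $\varphi_N=\Phi_2^{-1}\circ\cdots\circ\Phi_N^{-1}(0,t)$ is exactly your curve, and its degree-$j$ obstruction $\psi_j$ is your $G_j$. Your truncation argument at the end is also fine. The gap is in the step you yourself identify as the heart of the matter. You reduce the claim ``the mixed part of $H_{d_0}$ is nonzero $\Rightarrow$ $\tau(M,0)<\infty$'' to the assertion that the non-harmonic remainder $R$ of $P$ cannot be cancelled along an analytic disc, dismissing the $Q$-term as ``genuinely of higher order.'' That dismissal is valid only at the leading degree $\nu(z_1)$, which is why it suffices in Theorem \ref{T1}; at the degrees $d_0$ your induction must reach, the $Q$-term is not negligible. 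Once the harmonic jet $h$ of $P$ is absorbed into $z_1$, the factor $\operatorname{Im}z_1$ becomes $\operatorname{Im}W-2\operatorname{Im}h(z_2)$, and the piece $-2\operatorname{Im}h(z_2)\,Q(z_2,\cdot)$ injects non-harmonic terms at precisely the degrees you are trying to control. The quantity whose mixed part must vanish is therefore the full obstruction $G_{d_0}$ ($P$-part plus $Q$-part along the partially built curve), not the mixed part of $P$.

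Concretely, take $Q(z_2,t)=|z_2|^2$ and $P(z_2)=\operatorname{Re}(z_2^2)+\operatorname{Im}(z_2^2)|z_2|^2$. Here $R(z_2)=\operatorname{Im}(z_2^2)|z_2|^2=\frac{1}{2i}(z_2^3\bar z_2-z_2\bar z_2^3)$ is nonzero, purely mixed, of degree $4$, so your paragraph would conclude that the type is bounded by $4$; yet the disc $w_1=-w_2^2$, $w_2=t$ gives $\rho\circ\psi\equiv 0$, so $\tau(M,0)=+\infty$ (the $Q$-term cancels $R$ exactly, contradicting ``cannot restore the cancellation''). Conversely, with the same $Q$ and $P=\operatorname{Re}(z_2^2)$ one has $R\equiv 0$, so your test detects nothing, yet the mixed part of $H_4$ along your curve is $-\operatorname{Im}(t^2)|t|^2\neq 0$ and one checks that $\tau(M,0)=4$. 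So the implication you rely on is false in both directions. The paper avoids this by performing the absorption as an honest coordinate change at each step and re-deriving the normal form $\operatorname{Re}\xi_1+\psi_{j+1}(\xi_2)+o(|\xi_2|^{j+1},\operatorname{Im}\xi_1)$, so that $\psi_{j+1}$ automatically swallows the terms $-\operatorname{Im}(a_j\xi_2^j)\,Q$, and only then applies the Case 3 argument of Theorem \ref{T1} to $\psi_{j+1}$. Your proof needs the same repair: prove the harmonic/mixed dichotomy for the full degree-$d$ obstruction of the transformed defining function, not for the non-harmonic part of $P$ alone.
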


\begin{proof}
We start with the second order terms; $\rho $ as
$$
\rho(z)= \text{Re } z_1+\psi(z_2)+o(|z_2|^2, \text{Im } z_1),
$$
where $\psi$ is a real valued homogeneous polynomial of degree $2$.
Since $\tau(M,0)=+\infty$, the proof-argument of
Theorem \ref{T1} implies that $\psi(z_2)=\text{Re} (a_2 z_2^2)$.
Let $\Phi_2:(z_1,z_2) \mapsto (\xi_1, \xi_2)$ be an automorphism of $\mathbb C^2$
defined by $ \xi_1=z_1+ a _2 z_2^2 , \xi_2=z_2$ (a ``shear'' mapping).
Then
$$
\rho\circ \Phi_2^{-1}(\xi_1, \xi_2)=\text{Re}\xi_1+o(|\xi_2|^2,\text{Im } \xi_1 ).
$$
Now proceed by induction: Assume that, for each $j>2$, the coefficients $a_2,\cdots, a_{j-1}$
and the automorphisms $\Phi_2,\cdots, \Phi_{j-1}$ have already been determined so that
$$
\rho\circ \Phi^{-1}_2\circ\cdots\circ \Phi_{j-1}^{-1}(\zeta)=\text{Re } \zeta_1+\psi_j(\zeta_2)
+o(|\zeta_2|^j, \text{Im } \zeta_1),
$$
where $\psi_j$ is either 0 or a real valued homogeneous polynomial of degree $j$.

The the proof-argument of Theorem \ref{T1} implies that
$\psi_j(z_2)=\text{Re} (a_j z_2^j)$. Thus let $\Phi_j:\mathbb C^2\to \mathbb C^2$
be the automorphism of  $\mathbb C^2$ defined by
$$
\xi_1=\zeta_1+ a_j \zeta_2^j,\; \xi_2=\zeta_2.
$$
We then obtain
$$
\rho\circ \Phi_2^{-1}\circ\cdots\circ \Phi_{j-1}^{-1}\circ \Phi_j^{-1}(\xi)=
\text{Re}\xi_1+o(|\xi_2|^j,\text{Im } \xi_1 ).
$$

This induction argument yields the sequence $\{a_k\}_{k=2}^\infty\subset \mathbb C$.
Furthermore, for each $N\geq 2$, a non-singular holomorphic curve $\varphi_N$ defined
on a neighborhood of $t=0$ in $\mathbb C$ by
$\varphi_N(t):= \Phi_2^{-1}\circ\cdots\circ \Phi_{N}^{-1}(0,t)$ which satisfies
$\rho\circ \varphi_N(t)=o(|t|^N)$, or equivalently
$\nu(\rho\circ \varphi_N)\geq N$.  Of course it is clear that
$$
\varphi_N (t) = \Big(-\sum_{j=2}^N a_j t^j, ~t\Big),
$$
and the proof is complete.
\end{proof}

Note that if the series $\sum_{j=2}^{\infty}a_j z^j$ converges in
an open neighborhood of $z=0$ in the complex plane, then
$\nu(\rho\circ \varphi_\infty)=+\infty$,
where $\varphi_\infty$ is the holomorphic curve given on a neighborhood of $t=0$
in $\mathbb C$ by
$$
z_1(t)= -\sum_{j=1}^\infty a_j t^j,\quad z_2(t)=t .
$$

So it is natural to ask at this point whether there exists a regular holomorphic curve
$\varphi_\infty$ defined on a neighborhood of the origin in the complex plane such that
 $\nu(\rho\circ \varphi_\infty)=+\infty$, or even more bold to ask whether the above
procedure may produce such curve. The following example gives the negative answer.

\begin{example}
\it There exists a hypersurface germ $(M,0)$ with $\tau(M,0)=+\infty$ that does not admit any
regular holomorphic curve that has infinite order contact with $M$ at $0$. \rm
\smallskip

The construction is as follows: for $n=2,3,\cdots$, denote by
$$
g_n(t)=\frac{1}{t^n-a_n}+\frac{1}{a_n},
$$
a function of the single complex variable $t$ with
for $|t|<1/n$, where $a_n=2/n^n$. Then $g_n$ is holomorphic on
$\{|t|<1/n\}$ with $\nu(g_n)=n$. Expanding $g_n$ into Taylor series we obtain
$$
g_n(t)=\frac{1}{a_n}-\frac{1}{a_n} \frac{1}{1-t^n/a_n}=-\sum_{k=1}^\infty
 \frac{1}{a^{k+1}_n}t^{nk}.
$$
Then
$$
g^{(j)}_n(0)=
\begin{cases}
-\dfrac{(nk)!}{a^{k+1}_n}  & \text{if}\; j=nk \text{ for some integers }  k\geq 1, n\geq 2 \\
0    &\text{otherwise}.
\end{cases}
$$
For each $n=2,3,\cdots$ denote by $\tilde f_n(z)$ the $\mathcal C^\infty$-smooth
function on $\mathbb C$ such that
$$
\tilde f_n(z)=
\begin{cases}
\text{Re}( g_n(z))      & \text{if } |z|<1/(n+1)  \\
0                       & \text{if } |z|>1/n.
\end{cases}
$$
Of course, $\nu(\tilde f_n)=n$ and
$$
\frac{\partial^j}{\partial z^j}\tilde f_n(0)=
\begin{cases}
-\dfrac{1}{2}\dfrac{(nk)!}{a^{k+1}_n} & \text{if } j=nk \text{ for some integers }  k\geq 1,
n\geq 2 \\
0 & \text{otherwise}.
\end{cases}
$$
Denote by $\{\lambda_n\}$ an increasing sequence of positive numbers such that
$$
\lambda_n\geq\max\Big\{1, \Big\|\frac{\partial^{k+j} \tilde f_n}{\partial z^k\partial \bar{z}^j}
 \Big\|_\infty \colon j,k\in \mathbb N, k+j\leq n\Big\},
$$
where $\| ~\|_\infty$ represents the supremum norm.  Now
let $f_n(z)=\frac{1}{n^n \lambda_n^n}\tilde f_n(\lambda_n z)$.
The repeated use of the chain rule implies that
$$
\frac{\partial^k f_n}{\partial z^k}(z)=\frac{1}{n^n \lambda_n^{n-k}}
\frac{\partial^k \tilde f_n}{\partial z^k}(\lambda_n z),\; k=0,1,\cdots.
$$
Combining this with the previous result for the $k$-th derivative of $\tilde f_n$ at zero, one
arrives at
$$
\frac{\partial^k f_n}{\partial z^k}(0)=
\begin{cases}
-\dfrac{n! n^n}{8} & \text{if } k\mid n  \\
0                               & \text{if } k\nmid  n.
\end{cases}
$$
Let $f(z)=\sum_{n=2}^{\infty}f_n(z)$. For every $k,j$, non-negative integers, one sees that
\begin{equation*}
\begin{split}
 \sum_{n=2}^{\infty}\Big\|\frac{\partial^{k+j} f_n}{\partial z^k\partial \bar {z}^j}(z)\Big\|_\infty
&\leq  \sum_{n=2}^{j+k}\frac{1}{n^n\lambda_n^{n-k-j}}\Big\|
\frac{\partial^{k+j} \tilde f_n}{\partial z^k\partial \bar {z}^j}(z)\Big\|_\infty\\
&+ \sum_{n=j+k+1}^{\infty}\frac{1}{n^n\lambda_n^{n-k-j-1}}
\dfrac{\|\frac{\partial^{k+j} \tilde f_n}{\partial z^k\partial \bar {z}^j}(z)\|_\infty}{\lambda_n}\\
&\leq  \sum_{n=2}^{j+k}\frac{1}{n^n\lambda_n^{n-k-j}}\Big\|
\frac{\partial^{k+j} \tilde f_n}{\partial z^k\partial \bar {z}^j}(z) \Big\|_\infty
+\sum_{n=j+k+1}^{\infty} \frac{1}{n^n} \\
&<+\infty.
\end{split}
\end{equation*}
This shows that $f\in\mathcal{C}^\infty(\mathbb C)$.

Let $\{p_n\}_{n=1}^\infty$ be a sequence of prime numbers such that $p_n\to+\infty$
as $n\to \infty$. It is easy to see that
\begin{equation*}
\begin{split}
 \frac{\partial^{p_n} }{\partial z^{p_n}}f(0)&=\sum_{j=2}^\infty
\frac{\partial^{p_n} }{\partial z^{p_n}}f_j(0)\\
&=\sum_{j=2}^{p_n-1}\frac{\partial^{p_n} }{\partial z^{p_n}}f_j(0)+
\frac{\partial^{p_n} }{\partial z^{p_n}}f_{p_n}(0)+\sum_{j=p_n+1}^{p_n-1}
\frac{\partial^{p_n} }{\partial z^{p_n}}f_j(0)\\
&=\frac{\partial^{p_n} }{\partial z^{p_n}}f_{p_n}(0)=-\frac{p_n! (p_n)^{p_n}}{8}.
\end{split}
\end{equation*}

The hypersurface germ $M$ at $(0,0)$ we consider is defined by
$$
M=\{(z_1,z_2)\in \mathbb C^2: \rho=\text{Re} z_1+f(z_2) =0\}.
$$
We are going to show that $\tau(M,0)=+\infty$. For this purpose, for each $N\geq 2$,
 consider $\varphi_N=(z_1,z_2)$, a holomorphic curve defined on
$\{t\in \mathbb C:|t|<\dfrac{1}{\lambda_N( N+1)}\}$ by
$$
z_1(t)= -\sum_{n=2}^N \frac{1}{n^n\lambda_n^n}g_n(\lambda_n t); z_2(t)=t .
$$
Then $\rho\circ \varphi_N(t)=\sum_{n=N+1}^\infty f_n(t)$.
Since $\nu(f_n)=n$ for $n=2,3\cdots$, it follows that $\nu(\rho\circ \varphi_N)=N+1$,
and hence $\tau(M,0)=+\infty$.

We finally demonstrate that there is no regular holomorphic curve $\varphi_\infty(t)=(h(t),t)$,
such that $\nu(\rho\circ \varphi_\infty)=+\infty$.

Assume the contrary that such a holomorphic curve exists.
Then $\rho\circ \varphi_\infty(t)= \text{Re } h(t)+f(t)=o(t^N)$ for every $N=2,3,\cdots$,
and thus $h^{(N)}(0)= -2\frac{\partial^{N} }{\partial z^{N}}f(0)$ for any $N=0,1,\cdots$.
Consequently, $h^{(p_n)}(0)=-2\dfrac{\partial^{p_n} }{\partial z^{p_n}}f(0)
=\dfrac{(p_n!) (p_n)^{p_n}}{4}$, and moreover
\begin{equation*}
\begin{split}
\limsup_{N\to\infty}\sqrt[N]{\frac{|h^{(N)}(0)|}{N!}}
&\geq \limsup_{p_n\to\infty}\sqrt[p_n]{\frac{|h^{(p_n)}(0)|}{(p_n)!}}\\
&=\lim_{p_n\to\infty}\sqrt[p_n]{\frac{(p_n !) (p_n)^{p_n}}{4(p_n !)}}=
\lim_{p_n\to \infty} \frac{p_n}{\sqrt[p_n]{4}}=+\infty.
\end{split}
\end{equation*}
This implies that the Taylor series of $h(z)$ at $0$ has radius of convergence 0, which is impossible since $h$ is holomorphic in a neighborhood of the origin.
This ends the proof. \hfill $\Box$
\end{example}

\section{Analysis of holomorphic tangent vector fields}

This section is entirely devoted to the proof of Theorem \ref{T2}.
\medskip

\setcounter{theorem}{0}
Let $M=\{(z_1.z_2) \in \CC^2 \colon \hbox{Re }z_1 + P(z_2) + (\hbox{Im }z_1)\ Q(z_2,
\text{Im }z_1)=0\}$ be the real hypersurface germ at $0$ described in the hypothesis of Theorem
\ref{T2}.
Our present goal is to characterize its holomorphic tangent vector fields.
\smallskip

For the sake of smooth exposition, we shall present the proof in two subsections. In 4.1, several technical lemmas are introduced and proved. Then the proof of Theorem \ref{T2} is presented in 4.2.

\subsection{Technical lemmas} Start with

\begin{lemma} \label{l1}
If $\beta$ is a real number, then
\begin{equation}
\label{eq1}
\lim_{z\to 0} \text{Re} \Big[1 +i \beta z \frac{P'(z)}{P(z)^{n+1}}\Big] \not= 0
\end{equation}
for any nonnegative integer $n$.
\end{lemma}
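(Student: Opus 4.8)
The plan is to reduce the statement to a fact about the angular derivative of $P$ on small circles and then to derive a contradiction from an averaging identity. Throughout I would write $z = re^{i\theta}$ and abbreviate $g(z) := \mathrm{Re}\big[1 + i\beta z\, P'(z)/P(z)^{n+1}\big]$. The case $\beta = 0$ is immediate, since then $g \equiv 1$ and the limit is $1 \ne 0$; so I would assume $\beta \ne 0$ from here on.

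First I would compute the real part explicitly. Since $P$ is real-valued we have $P_{\bar z} = \overline{P_z}$, and a direct calculation using $\partial_\theta z = iz$ gives the identity $\partial_\theta P = iz P_z - i\bar z P_{\bar z} = 2\,\mathrm{Re}\big(iz\, P'(z)\big)$. Because $P(z)^{n+1}$ is real and, by hypothesis (1), strictly positive for $z \ne 0$, it follows that
$$
g(re^{i\theta}) = 1 + \frac{\beta}{2}\cdot\frac{\partial_\theta P}{P^{n+1}}.
$$

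The crux is then the observation that the second summand is, on each fixed circle $|z| = r$ (with $r$ small enough that the germ is defined, where hypothesis (1) guarantees $P > 0$ everywhere on the circle), a total derivative in $\theta$ of a single-valued $2\pi$-periodic function: for $n \ge 1$ it equals $-\tfrac1n\,\partial_\theta\big(P^{-n}\big)$, and for $n = 0$ it equals $\partial_\theta(\log P)$, the logarithm being legitimate precisely because $P > 0$ off the origin. Hence $\int_0^{2\pi}\partial_\theta P/P^{n+1}\,d\theta = 0$, and therefore
$$
\int_0^{2\pi} g(re^{i\theta})\,d\theta = 2\pi
$$
for every sufficiently small $r > 0$.

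Finally I would argue by contradiction. If the limit in \eqref{eq1} were $0$, then for every $\varepsilon > 0$ there would be a $\delta > 0$ with $|g(z)| < \varepsilon$ whenever $0 < |z| < \delta$; in particular $|g(re^{i\theta})| < \varepsilon$ for all $\theta$ once $r < \delta$. Integrating in $\theta$ would then force $2\pi = \big|\int_0^{2\pi} g\,d\theta\big| \le 2\pi\varepsilon$, which is impossible for $\varepsilon < 1$. The only delicate points I anticipate are bookkeeping rather than conceptual: justifying the polar identity for the real part, and invoking hypothesis (1) twice—once to divide by $P^{n+1}$ and once to use $\log P$ when $n = 0$—so that the integrand is a genuine periodic total derivative with vanishing circle integral. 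I expect that neither the infinite-order vanishing in (2) nor the condition (3) on $Q$ enters the proof of this particular lemma.
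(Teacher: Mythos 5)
Your proof is correct and is essentially the paper's argument: both exploit that $\mathrm{Re}\,[i\beta z P'/P^{n+1}]$ is the angular derivative of the single-valued periodic function $\tfrac12\log P$ (or $-\tfrac{1}{2n}P^{-n}$) on each small circle, so that integrating over one revolution kills that term while the constant $1$ survives, contradicting the assumption that the limit is $0$. The only difference is presentational — you average $g$ over the circle in $\theta$, while the paper integrates $u'(t)$ for the parametrization $t\mapsto re^{i\beta t}$ over one period — and both correctly use $P>0$ off the origin to legitimize $\log P$ and the division by $P^{n+1}$.
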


\begin{proof}
We may assume that $\beta \ne 0$ as there is nothing to prove otherwise.  Suppose that
$$
\lim_{z\to 0} \text{Re} \Big[1 +i \beta z \frac{P'(z)}{P(z)^{n+1}}\Big] = 0.
$$
Then let
\begin{equation*}
F(z):=
\begin{cases}
\dfrac{1}{2}\log P(z)  &\text{if  } n=0 \\
\dfrac{-1}{2n P^n(z)}  & \text{if  } n\geq 1.
\end{cases}
\end{equation*}
Also let $u(t):=F(r e^{i\beta t }),\; t\in (-\infty,+\infty)$, for some $r>0$ is sufficiently small.
Then (\ref{eq1}) implies that
$u'(t)=-1+\gamma(r e^{i\beta t}), \; t\in (-\infty,+\infty)$. Let $r_0>0$ be
such that $|\gamma(re^{i\beta t})|<1/(2|\beta|)$ for all $r<r_0$ and $t\in (-\infty,+\infty)$.
Now for a fixed number $r$ with $0<r<r_0)$ we have
$u(t)-u(0)=-t + \int_0^t \gamma(re^{i\beta s})ds$ for $t\in (-\infty,+\infty)$. Thus
$0=|u(2\pi/\beta)-u(0)|\geq 2\pi/|\beta|-\int_0^{2\pi/\beta}|\gamma(re^{i\beta s})|ds
\geq\pi/|\beta|$,
which is impossible. Hence the lemma is proved.
\end{proof}

\begin{lemma}\label{l22}
Denote the punctured unit disc by $\Delta^*_{\epsilon_0}:=
\{z\in \mathbb C: 0<|z|<\epsilon_0\}$.
If a curve $\gamma: (0,1) \to \Delta^*_{\epsilon_0}$ defined on the unit open interval $(0,1)$
is $\mathcal{C}^1$-smooth such that $\lim_{t \downarrow 0} \gamma(t)= 0$ then, for any
positive integer $n$, the function
\begin{equation}\label{eq44}
\text{Re} \Big[\gamma'(t) \frac{ P'(\gamma(t))}{P^{n+1}(\gamma(t))}\Big]
\end{equation}
cannot be bounded on $(0,1)$.
\end{lemma}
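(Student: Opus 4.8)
The plan is to recognize the expression \eqref{eq44} as a genuine total derivative along $\gamma$. Motivated by the potential function $F$ used in the proof of Lemma \ref{l1}, I would set, for the given positive integer $n$,
$$
F(z) = \frac{-1}{2n\, P^n(z)}, \qquad z \in \Delta^*_{\epsilon_0},
$$
which is a well-defined, real-valued, $\mathcal C^1$ function on the punctured disc because $P(z) > 0$ there. A direct Wirtinger computation gives $\partial F/\partial z = \tfrac12\, P'(z)/P^{n+1}(z)$, and since $F$ is real-valued one has $\partial F/\partial \bar z = \overline{\partial F/\partial z}$. Hence, writing $u(t) := F(\gamma(t))$ for $t \in (0,1)$, the chain rule yields
$$
u'(t) = 2\,\text{Re}\Big[\frac{\partial F}{\partial z}(\gamma(t))\, \gamma'(t)\Big]
= \text{Re}\Big[\gamma'(t)\,\frac{P'(\gamma(t))}{P^{n+1}(\gamma(t))}\Big],
$$
so that \eqref{eq44} is precisely $u'(t)$. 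The task is thereby reduced to showing that $u'$ cannot be bounded on $(0,1)$.

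Next I would record the boundary behaviour of $u$ itself. Since $\lim_{t\downarrow 0}\gamma(t) = 0$ and $P$ is continuous with $P(0) = 0$, we have $P(\gamma(t)) \to 0$; and because $P(\gamma(t)) > 0$ for every $t$ (the curve stays in the punctured disc), in fact $P(\gamma(t)) \to 0^{+}$. Consequently $u(t) = -1/(2n\, P^n(\gamma(t))) \to -\infty$ as $t \downarrow 0$. This step uses only the positivity of $P$ off the origin together with $P(0)=0$.

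The contradiction then runs as follows. Suppose, contrary to the claim, that \eqref{eq44} is bounded on $(0,1)$, say $|u'(t)| \le C$ for all $t$. Fix any $t_0 \in (0,1)$. For every $s$ with $0 < s < t_0$, the fundamental theorem of calculus gives
$$
u(s) = u(t_0) - \int_s^{t_0} u'(\tau)\, d\tau \ge u(t_0) - C\,t_0,
$$
so $u$ is bounded below on $(0,t_0)$. This flatly contradicts the fact that $u(s) \to -\infty$ as $s \downarrow 0$ established above, and the lemma follows.

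The argument is short, and I do not anticipate a serious obstacle; the one point demanding care is the chain-rule identity $u'(t) = \text{Re}[\gamma'(t)\,P'(\gamma(t))/P^{n+1}(\gamma(t))]$, where the appearance of the real part must be traced precisely to the fact that $F$ is real-valued, so that the $\partial/\partial z$ and $\partial/\partial\bar z$ contributions are complex conjugates of one another. It is worth emphasizing where the hypotheses enter: the strict positivity of $P$ away from the origin is exactly what forces the one-sided blow-up $u \to -\infty$ rather than mere oscillation, and this is essential, since without it the lower bound yielding the contradiction would not be available.
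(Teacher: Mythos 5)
Your proof is correct and follows essentially the same route as the paper: both introduce the potential $F(z)=-1/(2nP^n(z))$, identify the expression \eqref{eq44} as $u'(t)$ for $u=F\circ\gamma$, and derive a contradiction between the boundedness of $u$ (forced by integrating a bounded $u'$) and the blow-up $u(t)\to-\infty$ as $t\downarrow 0$. Your write-up is in fact somewhat more careful than the paper's, since it makes the chain-rule identity and the role of the positivity of $P$ explicit.
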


\begin{proof} Suppose that there exists such a $\mathcal{C}^1$-smooth curve
 $\gamma: (0,1)\to \Delta^*_{\epsilon_0}$. Let
\begin{equation*}
F(z):=
\begin{cases}
\dfrac{1}{2}\log P(z) & \text{if  } n=0 \\
\dfrac{-1}{2n P^n(z)} & \text{if  } n\geq 1
\end{cases}
\end{equation*}
and let $u(t):=F(\gamma(t)), \; t\in (0,1)$. (\ref{eq44}) implies that
$$
u'(t)=\text{Re } \Big[\gamma'(t) \frac{ P'(\gamma(t))}{P^{n+1}(\gamma(t))}\Big], \quad \forall t\in (0,1).
$$
Since $u'(t)$ is bounded on $(0,1)$, $u(t)$ also has to be bounded on $(0,1)$.  But this last
is impossible since $u(t)=F(\gamma(t)) \to -\infty$ as $t\downarrow 0$. The lemma is proved.
\end{proof}

This lemma shows in particular that the function ${P'(z)}/{P(z)}$ is unbounded
along any smooth curve  $\gamma: (0,1)\to \Delta^*_{\epsilon_0}\; (\epsilon_0>0)$ such that $\gamma'$ stays bounded on $(0,1)$ and satisfies $\lim_{t\downarrow 0}\gamma(t)=0$.  It has generally been expected that, when a real-valued smooth function $f(t)$ of real variable $t$ near $0$ vanishes to infinite order at $0$, $\lim_{t\downarrow 0}\frac{f'(t)}{f(t)}=\infty$ has to hold and hence the above lemma would have to follow.  However, such a quick expectation is not valid. We present an example here.

\begin{example}
Let $g: (0,1)\to \mathbb R$ be a real valued $\mathcal{C}^\infty$-smooth
function satisfying
\begin{enumerate}
\item[(\romannumeral1)] $g(t)\equiv -2n$ on the closed interval
$ \Big[\dfrac{1}{n+1}\Big(1+\dfrac{1}{3n}\Big),\dfrac{1}{n+1}\Big(1+\dfrac{2}{3n}\Big) \Big]$ for $n=1,2,\cdots$;
\item[(\romannumeral2)] $\dfrac{-2}{t}<g(t)<\dfrac{-1}{t}$ for every $t\in (0,1)$;
\item[(\romannumeral3)] for each $k\in \mathbb N$ there exists $d(k)>0$
such that $|g^{(k)}(t)|\leq \dfrac{1}{t^{d(k)}},\; t\in (0,1)$.
\end{enumerate}
Now let
\begin{equation*}
P(z):=
\begin{cases}
\exp(g(|z|^2)) & \text{if  } 0<|z|<1 \\
0  &\text{if  } z=0,
\end{cases}
\end{equation*}
then this is a $\mathcal{C}^\infty $ function on the open unit disc $ \Delta $ that
vanishes to infinite order at the origin.
However, ${P'(z)}/{P(z)}$ does not tend to $\infty$ as $z \to 0$.
\end{example}

\begin{lemma}\label{l3}
If $a, b$ are complex numbers and if $g_1, g_2$ are smooth functions defined on the punctured
disc $\Delta^*_{\epsilon_0}:=\{0 < |z|< \epsilon_0\}$ with sufficiently small radius satisfying:
\begin{itemize}
\item[(A1)] $g_1(z) = O(|z|^\ell)$ and $g_2(z) = o(|z|^m)$, and
\item[(A2)] $\text{Re} \Big[a z^m+\frac{1}{P^n(z)}\Big(b z^\ell \frac{P'(z)}{P(z)}
+g_1(z) \Big)\Big]= g_2(z)$ for every $z \in \Delta^*_{\epsilon_0}$
\end{itemize}
for any nonnegative integers $\ell, m$ and $n$ except for the following two cases
\begin{itemize}
\item[(E1)] $\ell=1$ and $\text{Re }b = 0$, and
\item[(E2)] $m=0$ and $\text{Re } a = 0$
\end{itemize}
then $ab=0$.
\end{lemma}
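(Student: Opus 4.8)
The plan is to argue by contradiction: assume $a\neq 0$ and $b\neq 0$ and, outside the two excluded configurations (E1) and (E2), produce a violation of the unboundedness phenomenon behind Lemmas \ref{l1} and \ref{l22}. The guiding idea is to transport the identity (A2) onto a single well-chosen curve so that the most singular summand, $b z^\ell P'(z)/P^{n+1}(z)$, becomes an exact derivative of the real potential
$$
F(z)=\begin{cases}\tfrac12\log P(z) & n=0,\\[2pt] \dfrac{-1}{2nP^n(z)} & n\geq 1,\end{cases}
$$
already used there. Concretely, I would take $\gamma$ to be an integral curve of the holomorphic ODE $\gamma'(t)=b\,\gamma(t)^\ell$ and set $u(t)=F(\gamma(t))$; by the chain-rule computation underlying Lemma \ref{l22}, $u'(t)=\mathrm{Re}\big[\gamma'(t)P'(\gamma(t))/P^{n+1}(\gamma(t))\big]=\mathrm{Re}\big[b\gamma^\ell P'/P^{n+1}\big]$, which is exactly the singular term of (A2) restricted to $\gamma$.

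First I would settle the geometry of this curve. For $\ell=0$ it is the ray $\gamma(t)=bt$, reaching $0$ in finite time; for $\ell\geq 2$ the relation $\gamma^{1-\ell}=(1-\ell)bt+\mathrm{const}$ forces $|\gamma(t)|\to 0$ as $t$ tends to an endpoint; for $\ell=1$ one has $\gamma(t)=\gamma_0 e^{bt}$, whose modulus tends to $0$ precisely when $\mathrm{Re}\,b\neq 0$ and otherwise traces a circle about $0$. Thus the construction delivers a curve limiting to $0$ in every case except $\ell=1,\ \mathrm{Re}\,b=0$, which is exactly (E1); there the circle never approaches the origin, $F\circ\gamma$ stays bounded, and no contradiction is available, consistent with the sharp nonvanishing limit recorded in Lemma \ref{l1}.

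The heart of the matter is the reduction of (A2) along $\gamma$, and here the decisive structural fact is that $P$ is real-valued, so $1/P^n$ factors out of the real part. Writing $1/P^n=-2n\,u$ (for $n\geq 1$; the case $n=0$ is analogous and simpler, since the $g_1$-term is then unweighted) turns (A2) into the scalar linear ODE
$$
u'(t)=c(t)\,u(t)+B(t),\qquad c(t)=2n\,\mathrm{Re}\,g_1(\gamma(t)),\quad B(t)=\mathrm{Re}\big(g_2-a\gamma^m\big)(\gamma(t)).
$$
By (A1), $c(t)=O(|\gamma(t)|^\ell)$ is integrable up to the endpoint, so the integrating factor $\exp(-\int c)$ stays between positive constants; hence $u$ grows no faster than $\int|B|$. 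Since $P$ vanishes to infinite order, $u(t)\to-\infty$ faster than any power of $t$, whereas $B(t)=O(|\gamma(t)|^m)$ makes $\int|B|$, and therefore $u$, grow at most polynomially in $t$ — the desired contradiction, forcing $b=0$ and hence $ab=0$.

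The step I expect to be the main obstacle is precisely this reduction, because (A2) carries two competing singular contributions, $b z^\ell P'/P^{n+1}$ and $g_1/P^n$, both of which blow up as $z\to 0$ and could conceivably cancel; the pointwise ratio $P'/P$ need not tend to infinity (witness the Example following Lemma \ref{l22}), so one cannot declare either term dominant outright. The resolution is that, after the substitution, the $g_1$-contribution reappears not as a forcing term but as the coefficient $c(t)u(t)$ of a linear ODE, where its smallness ($O(|\gamma|^\ell)$, integrable) renders it harmless. The one place the bookkeeping is genuinely delicate is $m=0$, where the driving contribution $\mathrm{Re}[a\gamma^0]=\mathrm{Re}\,a$ degenerates to a constant; isolating $\mathrm{Re}\,a=0$ is exactly what singles out the second exception (E2), after which the remaining cases deliver $ab=0$ as claimed.
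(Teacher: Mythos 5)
Your overall strategy --- integrating (A2) along integral curves of $\gamma'=b\gamma^{\ell}$ and comparing the growth of $u=F(\gamma)$ forced by the differential equation with the decay forced by the infinite-order vanishing of $P$ --- is essentially the paper's, and your reformulation for $n\geq1$ as a linear ODE $u'=c\,u+B$ with $\int|c|<\infty$ is a clean, correct way to absorb the $g_1/P^n$ term. In the regime $n\geq1$, and also when $\ell\in\{0,1\}$, your growth-rate contradiction does go through.

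The gap is in the case $n=0$, $\ell\geq2$. There $u=\tfrac12\log P(\gamma)$ and $|\gamma(t)|\approx t^{-1/(\ell-1)}$, so the infinite-order vanishing of $P$ only forces $|u(t)|\gtrsim N\log t$ for every $N$ --- a logarithmic, not super-polynomial, lower bound; your assertion that $u\to-\infty$ ``faster than any power of $t$'' is only valid for $n\geq 1$. Meanwhile your upper bound $|u|\lesssim 1+\int|B|$ with $B=O(|\gamma|^m)$ gives $|u|\lesssim t^{1-m/(\ell-1)}$ when $0\leq m<\ell-1$, which grows faster than any $N\log t$: the two estimates are mutually consistent, so no contradiction results. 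This is exactly the regime of the paper's Subcases 3.2.3 and 3.2.4, and there one cannot argue by magnitudes alone; one must show that the forcing term $-\text{Re}\,(a\gamma^m)$ accumulates with a definite favorable sign so that $u\to+\infty$, which does contradict $\log P\to-\infty$. For $m=0$ this requires choosing the direction of traversal of the curve according to the sign of $\text{Re}\,a$ (your remark about (E2) isolates the excluded case but does not supply this step), and for $0<m<\ell-1$ the paper must pass to one of the $\ell-1$ rotated integral curves $\gamma_j=\tau^{-j}\gamma$ and verify a cosine positivity condition to pin down the sign. Nothing in your proposal provides this idea, and it is the hardest part of the lemma.
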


\begin{proof} We shall prove the method of contradiction.  Suppose that there exist
non-zero complex numbers $a,b\in \mathbb C^*$ such that the identity in (A2) holds with the smooth functions $g_1$ and $g_2$ satisfying the growth conditions specified in (A1).

Denote by $F(z):=\dfrac{1}{2}\log P(z) $.
\medskip

\noindent
{\bf Case 1.} {\boldmath $\ell=0$:}

Let $u(t):=F(bt),~(0<t< \delta_0)$ with $\delta_0$ sufficiently small.
By (A2), it follows that $u'(t)$ is bounded on the interval $(0, \delta_0)$.
Integration shows that $u(t)$ is also bounded on $(0, \delta_0)$. But this is impossible since
$u(t)\to -\infty$ as $t\downarrow 0$.
\medskip

\noindent
{\bf Case 2.} {\boldmath $\ell=1$:}

 Let $\gamma(t):=e^{bt}, \; t\in (-\infty,+\infty)$.
Then $|\gamma(t)|=e^{ b_1 t}$ and
 $\gamma'(t)=b\gamma(t)$, where $b_1=\text{Re} (b)$. By (E1), we have $b_1\ne 0$. Assume momentarily that $b_1<0$.

Denote by $u(t):= F(\gamma(t))$ for $t\geq t_0$ with $t_0>0$ sufficiently large. It follows
by (A2) that $u'(t)$ is bounded on $(t_0,+\infty)$.
Therefore there exists a constant $A>0$ such that
$ |u(t)|\leq A|b_1| t =A\log \dfrac{1}{|\gamma(t)|}$ for all $t>t_0$.
Hence we obtain, for all $t>t_0$, that
$\log P(\gamma (t))=2u(t)\geq -2A \log \dfrac{1}{|\gamma(t)|} $, and thus
$$
P(\gamma(t))\geq |\gamma(t)|^{2A}, \; t\geq t_0.
$$
Hence we arrive at
$$
\lim_{t\to +\infty} \frac{P(\gamma(t))}{|\gamma(t)|^{2A+1}}=+\infty,
$$
which is impossible since $P$ vanishes to infinite order at $0$.  The case $b_1 >0$ is similar, with considering the side $t<0$ instead.
\medskip

\noindent
{\bf Case 3.} {\boldmath $\ell=k+1\geq 2$:}

Choose $c\in \mathbb C$ such that
$c-kbt\in \mathbb C\setminus [0,+\infty) $ for all $t\in (-\infty, +\infty)$.
Let $\gamma(t):= \sqrt[-k]{c-kbt}=\sqrt[-k]{|c-kbt|} e^{-i arg (c-kbt)/k}$, $0<arg (c-kbt)<2\pi$.
Note that $|\gamma(t)|\approx \dfrac{1}{|t|^{1/k}}$ for $|t|\geq t_0$ with $t_0>0$ big enough.
Let $u(t):= F(\gamma(t))$. It follows from (A2) that
\begin{equation}\label{eq77}
u'(t)=-P^n(\gamma(t))(\text{Re} (a \gamma^m(t) +o(|\gamma(t)|^m))+O(|\gamma(t)|^{k+1}).
\end{equation}
We now consider the following.
\medskip

\underbar{\it Subcase 3.1}: $n\geq 1.$

Since $P$ vanishes to infinite order at the origin,
 (\ref{eq77}) and the discussion above imply
\begin{equation*}
\begin{split}
|u'(t)|&\lesssim P^n(\gamma(t)) |\gamma(t)|^m+\frac{1}{t^{1+/k}}\\
&\lesssim P^n(\gamma(t))+\frac{1}{t^{1+1/k}}\\
&\lesssim \frac{P^n(\gamma(t))}{|\gamma(t)|^{2k}} \frac{1}{t^2}+\frac{1}{t^{1+1/k}}\\
&\lesssim  \frac{1}{t^2}+\frac{1}{t^{1+1/k}} \\
&\lesssim  \frac{1}{t^{1+1/k}},
\end{split}
\end{equation*}
for all $t\geq t_0$. This in turn yields
\begin{equation*}
\begin{split}
|u(t)|&\lesssim |u(t_0)|+ \int_{t_0}^t   \frac{1}{s^{1+1/k}}ds\\
 &\lesssim |u(t_0)|
+k \Big(\frac{1}{t_0^{1/k}}-\frac{1}{t^{1/k}}\Big)\\
&\lesssim 1
\end{split}
\end{equation*}
for all $t>t_0$. This is a contradiction, because $\lim_{t\to\infty} u(t)=-\infty$.
\medskip

\noindent
\underbar{\it Subcase 3.2}: $n=0$.

We again divide the argument in 4 sub-subcases.
\medskip

\underbar{\it Subcase 3.2.1}: $m/k>1$.

It follows from (\ref{eq77}) that
\begin{equation*}
|u'(t)|\lesssim  \frac{1}{t^{m/k}}+\frac{1}{t^{1+1/k}}\\
\end{equation*}
for all $t\geq t_0$. Hence, we get

\begin{equation*}
\begin{split}
|u(t)|&\lesssim |u(t_0)|+ \int_{t_0}^t   \Big(\frac{1}{s^{m/k}}+\frac{1}{s^{1+1/k}}\Big)ds\\
 &\lesssim |u(t_0)|+ \frac{k}{m-k}  \Big(\frac{1}{t_0^{m/k-1}}-\frac{1}{t^{m/k-1}}\Big)
+k \Big(\frac{1}{t_0^{1/k}}-\frac{1}{t^{1/k}}\Big)\\
&\lesssim 1
\end{split}
\end{equation*}
for all $t>t_0$, a contradiction.
\medskip

\underbar{\it Subcase 3.2.2}: ${m}/{k}=1$.

Here, (\ref{eq77}) again implies
\begin{equation*}
|u'(t)| \lesssim  \frac{1}{t}+\frac{1}{t^{1+1/k}} \lesssim  \frac{1}{t},
\end{equation*}
for all $t\geq t_0$. Consequently,

\begin{equation*}
\begin{split}
|u(t)|&\lesssim |u(t_0)|+ \int_{t_0}^t   \frac{1}{s}\ ds\\
 &\lesssim |u(t_0)|+  (\log t-\log t_0)\\
 &\lesssim \log t \lesssim \log \frac{1}{|\gamma(t)|},
\end{split}
\end{equation*}
for all $t>t_0$. Therefore there exists a constant $A>0$ such that
$ |u(t)|\leq A\log \dfrac{1}{|\gamma(t)|}$ for all $t>t_0$.
Hence for all $t>t_0$,
$\log P(\gamma (t))=2u(t)\geq -2A \log \dfrac{1}{|\gamma(t)|} $, and thus
$$
P(\gamma(t))\geq |\gamma(t)|^{2A}, \; \forall t\geq t_0.
$$
This implies
$$ \lim_{t\to +\infty} \frac{P(\gamma(t))}{|\gamma(t)|^{2A+1}}=+\infty,$$
impossible since $P$ vanishes to infinite order at $0$.
\medskip

\underbar{\it Subcase 3.2.3}: $m=0$.

Let $h(t):=u(t)+\text{Re}(a) t$. Recall that in this case we have (E2) which says $\text{Re }a \neq 0$. Assume momentarily that $\text{Re}(a)<0$. (The case that $\text{Re}(a)>0$ will follow by a similar argument.)

By (\ref{eq77}), there is a constant $B>0$ that
$$
|h'(t)|\leq \frac{1}{2}|\text{Re}(a)| +B\frac{1}{t^{1+1/k}}.
$$
Therefore,
\begin{equation*}
\begin{split}
|h(t)|&\leq |h(t_0)|+ \frac{1}{2}|\text{Re}(a)| (t-t_0)+ B\int_{t_0}^t   \frac{1}{s^{1+1/k}}ds\\
 &\leq |h(t_0)|+ \frac{1}{2}|\text{Re}(a)| (t-t_0)+k B(\frac{1}{t_0^{1/k}}-\frac{1}{t^{1/k}})\\
\end{split}
\end{equation*}
for all $t>t_0$. Thus
\begin{equation*}
\begin{split}
u(t) &\geq -\text{Re}(a) t-|h(t)|\\
    &\geq |\text{Re}(a)| t-|h(t_0)|- \frac{1}{2}|\text{Re}(a)| (t-t_0)-
k B(\frac{1}{t_0^{1/k}}-\frac{1}{t^{1/k}})\\
     &\gtrsim t
\end{split}
\end{equation*}
for all $t>t_0$. It means that $u(t)\to +\infty$ as $t\to +\infty$, absurd.
\medskip

\underbar{\it Subcase 3.2.4}: $0<\frac{m}{k}<1$.

Notice first that $k\geq 2$.   Let $\tau=e^{i 2\pi/k}$ and $\gamma_j(t) :=\tau^{-j} \gamma (t)$
for  $j=0, 1\cdots,k-1$.
Then $\gamma_j'(t)= b \gamma_j^{k+1}(t)$ and $\gamma_j(t)\to 0$
as $t\to \infty$.

Set $u_j(t):= F(\gamma_j(t))$. Assume for a moment that $m$ and $k$ are relatively prime. (In the end, it will become obvious that this assumption can be taken without loss of generality.) Then $\tau^{m}$ is a primitive $k$-th root of unity. Therefore there
exist $j_0,j_1\in \{1,\cdots,k-1\} $ such that $\pi/2<arg(\tau^{mj_0})\leq\pi$ and\break
$-\pi\leq arg(\tau^{mj_1})<-\pi/2$. Hence, it follows that there exists
$j\in \{0,\cdots,k-1\}$ such that $\cos\big(arg(a/b)+\frac{k-m}{k} arg(-b)-2\pi mj/k\big)>0$.
Denote by
$$
A:=\frac{|a|}{(k-m)|b|}\cos\Big(arg(a/b)+ \frac{k-m}{k}arg(-b)-2\pi m j/k\Big)>0,
$$
a positive constant. Now let
$$
h_j(t):= u_j(t)+\text{Re}(\tau^{-mj} \frac{a}{-b(k-m)}(c-kbt)^{1-m/k}).
$$
Note that $arg(c-kbt)\to arg (-b) $ as $t\to +\infty$.
Hence it follows from (\ref{eq77}) that there exist positive constants $B$ and $t_0$ such that
$$
|h_j'(t)|\leq \frac{k-m}{4k} A (k|b|)^{1-m/k}\frac{1}{t^{m/k}}+\frac{B}{t^{1+1/k}}
$$
and
\begin{multline*}
\cos\Big(\arg(a/b)+\frac{k-m}{k}\arg(c-kbt)-2mj\pi /k\Big) \\
\geq \frac{1}{2}\cos\Big(\arg(a/b)+ \frac{k-m}{k}\arg(-b)-2mj \pi /k\Big)
\end{multline*}
for every $t\geq t_0$. Thus we have
 \begin{equation*}
\begin{split}
|h_j(t)|&\leq |h_j(t_0)|+A(k|b|)^{1-m/k}\frac{k-m}{4k} \int_{t_0}^t s^{-m/k}ds
+B\int_{t_0}^ts^{-1-1/k}ds\\
        & \leq |h_j(t_0)|+\frac{A}{4}(k|b|)^{1-m/k} (t^{1-m/k}-t_0^{1-m/k})
+kB(t_0^{-1/k}-t^{-1/k})
\end{split}
\end{equation*}
for $t>t_0$. Hence
\begin{equation*}
\begin{split}
u_j(t)&\geq -\text{Re} (\frac{a \tau^{-mj}}{-kb(1-m/k)} (c-kbt)^{1-m/k}) -|h_j(t)|\\
& \geq \frac{|a|}{|b|(k-m))} |c-kbt|^{1-m/k}\cos\Big( arg(a/b)\\
&\qquad +\frac{(k-m)arg(c-kbt)- 2mj\pi }{k}\Big)- |h_j(t_0)|\\
&\qquad -\frac{A}{4} (k|b|)^{1-m/k}(t^{1-m/k}-t_0^{1-m/k})-kB(t_0^{-1/k}-t^{-1/k})\\
&\geq \frac{A}{2}|c-kbt|^{1-m/k}- |h_j(t_0)|\\
&\qquad -\frac{A}{4}(k|b|)^{1-m/k} (t^{1-m/k}-t_0^{1-m/k})-kB(t_0^{-1/k}-t^{-1/k})\\
&\gtrsim t
\end{split}
\end{equation*}
for $t>t_0$.  This implies that $u_j(t)\to +\infty$ as $t\to +\infty$,
which is absurd since $\log P(z)\to -\infty$ as $z \to 0$.
\medskip

Hence all the cases are covered, and the proof of Lemma \ref{l3} is finally complete.
\end{proof}

\begin{lemma} \label{l6}
Suppose that $R$ is a real-valued $\mathcal{C}^1$-smooth function defined on the disc
$\Delta_{\epsilon} := \{z\in \mathbb C: |z|<\epsilon\}$ for some $\epsilon>0$.
Then,  $\text{Re }(i z (\partial R/\partial z)(z))=0$ for all
$z\in\Delta_\epsilon$ if and only if $R(z)=R(|z|)$.
\end{lemma}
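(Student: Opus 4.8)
The plan is to reduce the stated condition to the vanishing of the angular (polar) derivative of $R$, and then integrate. First I would pass to polar coordinates $z=re^{i\theta}$ and record the identity
$$
\frac{\partial}{\partial\theta}=iz\frac{\partial}{\partial z}-i\bar z\frac{\partial}{\partial\bar z},
$$
which follows from the chain rule together with $\partial z/\partial\theta=iz$ and $\partial\bar z/\partial\theta=-i\bar z$. Applying this to the real-valued function $R$ and using the elementary fact that $\overline{R_z}=R_{\bar z}$ for real $R$ (immediate from $R_z=\tfrac12(R_x-iR_y)$ with $R_x,R_y$ real), I would obtain
$$
\frac{\partial R}{\partial\theta}=izR_z-i\bar z R_{\bar z}=izR_z+\overline{izR_z}=2\,\text{Re}\big(izR_z\big).
$$
Thus the condition $\text{Re}(izR_z(z))=0$ for all $z$ is exactly equivalent to $\partial R/\partial\theta\equiv 0$ away from the origin.

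With this identity in hand, both implications are short. For the forward (``if'') direction, if $R(z)=R(|z|)$ then $R$ is independent of the angular variable $\theta$, so $\partial R/\partial\theta=0$ and therefore $\text{Re}(izR_z)=0$ throughout $\Delta_\epsilon$. For the converse, suppose $\text{Re}(izR_z)\equiv 0$; then $(\partial R/\partial\theta)(re^{i\theta})=0$ for every $r\in(0,\epsilon)$ and every $\theta$. Fixing such an $r$, the $\mathcal{C}^1$ function $\theta\mapsto R(re^{i\theta})$ has identically vanishing derivative on $\mathbb R$ and is hence constant; evaluating at $\theta=0$ gives $R(re^{i\theta})=R(r)$ for all $\theta$. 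Since this holds for every $r\in(0,\epsilon)$ and trivially at $z=0$, one concludes $R(z)=R(|z|)$ on all of $\Delta_\epsilon$.

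I do not expect a serious obstacle here; the only points requiring care are bookkeeping ones. The chief technical care is the treatment of the origin, where polar coordinates degenerate: the angular-derivative argument is valid only on the punctured disc, so I would recover the value at $z=0$ separately (it is automatic, as $R(0)$ is a single number) and invoke the continuity of $R$ to match the radial limit. I would also make sure to use the hypothesis that $R$ is real-valued precisely at the one place it enters, namely the relation $\overline{R_z}=R_{\bar z}$, which is what turns the combination $izR_z-i\bar z R_{\bar z}$ into twice the real part of $izR_z$.
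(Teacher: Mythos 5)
Your proof is correct and follows essentially the same route as the paper: the authors also fix $r\in(0,\epsilon)$, set $v(t):=R(re^{it})$, observe that $v'(t)=2\,\text{Re}\big(izR_z(z)\big)\big|_{z=re^{it}}=0$, and conclude $v$ is constant, the converse being immediate. Your version merely makes the polar-coordinate identity and the treatment of the origin explicit, which the paper leaves tacit.
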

\begin{proof}

\noindent
Let $r$ be an artribrary number such that $0<r<\epsilon$ and let
$v(t):= R(r e^{it} )$. Since $\text{Re }(i z (\partial R/\partial z)(z))=0 $, $v'(t)=0$ for every $t\in \mathbb R$.
Thus $v(t)\equiv v(0)$ and hence $R(z)=R(|z|)$.
This completes the proof as the converse is obvious.
\end{proof}

\begin{lemma} \label{l8}
If $R$ is a real-valued $\mathcal{C}^1$-smooth function defined on an open neighborhood,
say $U$, of the origin in $\mathbb C$, then on every circle $\{z\in\mathbb C\colon |z|=r\}$ contained in $U$
the function $\text{Re}(i z R'(z))$ either identically zero, or must change sign.
\end{lemma}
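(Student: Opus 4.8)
The plan is to reduce the statement to an elementary fact about the mean value of the derivative of a periodic function. Fix $r>0$ so that the circle $\{|z|=r\}$ is contained in $U$, parametrize it by $z=re^{it}$ with $t\in[0,2\pi]$, and set $v(t):=R(re^{it})$. This is exactly the substitution already used in the proof of Lemma \ref{l6}; the key observation is that the quantity $\text{Re}(izR'(z))$ restricted to the circle is (up to a constant) just $v'(t)$.

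First I would carry out the chain-rule computation. Writing $z=re^{it}$ we have $dz/dt=iz$ and $d\bar z/dt=-i\bar z$, so
\begin{equation*}
v'(t)=R_z\,\frac{dz}{dt}+R_{\bar z}\,\frac{d\bar z}{dt}=iz\,R_z(z)-i\bar z\,R_{\bar z}(z).
\end{equation*}
Since $R$ is real-valued we have $R_{\bar z}=\overline{R_z}$, whence $i\bar z\,R_{\bar z}=\overline{iz\,R_z}$, and therefore
\begin{equation*}
v'(t)=iz\,R_z(z)+\overline{iz\,R_z(z)}=2\,\text{Re}\big(izR'(z)\big)\big|_{z=re^{it}}.
\end{equation*}
Thus along $\{|z|=r\}$, under the parametrization $z=re^{it}$, the function $\text{Re}(izR'(z))$ equals $\tfrac12\,v'(t)$.

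Next I would invoke periodicity. Since $z=re^{it}$ returns to its starting value once $t$ increases by $2\pi$, the function $v$ is $2\pi$-periodic and of class $\mathcal C^1$, so its derivative has vanishing mean over a full period:
\begin{equation*}
\int_0^{2\pi} v'(t)\,dt=v(2\pi)-v(0)=0.
\end{equation*}
Now suppose $\text{Re}(izR'(z))$ is not identically zero on the circle; equivalently, the continuous function $v'$ is not identically zero. If $v'$ were everywhere $\geq 0$ (respectively everywhere $\leq 0$), then the vanishing of its integral over $[0,2\pi]$ would force $v'\equiv 0$ by continuity, contradicting the assumption. Hence $v'$ is strictly positive at some point and strictly negative at another, and so by the intermediate value theorem $\text{Re}(izR'(z))$ changes sign on $\{|z|=r\}$. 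This establishes the asserted dichotomy.

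I do not anticipate any substantial obstacle here. The only point requiring a little care is the chain-rule identity $v'(t)=2\,\text{Re}(izR'(z))$, which relies on $R$ being real so that $R_{\bar z}=\overline{R_z}$; once this is in hand, the conclusion follows purely from the mean-zero property of the derivative of a periodic function, with no use of the infinite-type hypotheses on $P$ or $Q$.
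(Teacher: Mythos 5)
Your proof is correct and follows essentially the same route as the paper: parametrize the circle as $z=re^{it}$, observe that $\text{Re}(izR'(z))$ is (up to the constant factor $2$, which the paper omits but which is immaterial) the derivative of the periodic function $v(t)=R(re^{it})$, and conclude that a continuous derivative of a periodic function must vanish identically or change sign. Your write-up merely makes explicit the mean-zero integral step that the paper leaves implicit.
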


\begin{proof}
Since $\frac{d}{dt} R(r e^{it}) = \text{Re }[i re^{it} (\partial R/\partial z)(re^{it})]$, the function
$R(r e^{it})$ cannot stay periodic in the real-variable $t$, unless
$\text{Re }[i re^{it} (\partial R/\partial z)(re^{it})]$ changes its sign.
\end{proof}

\begin{lemma}\label{l7} If $b$ is a complex number satisfying
\begin{equation}\label{eqqtn1}
\text{Re} (b z^k P'(z))=0,
\end{equation}
for some nonnegative integer $k$, except the case $k= 1$ and $\text{Re} (b)=0$,
on $z \in \Delta_\epsilon$ with $\epsilon>0$, then $b=0$.
\end{lemma}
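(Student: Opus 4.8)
My plan is to argue by contradiction, reading the hypothesis \eqref{eqqtn1} as the assertion that the function $P$ is constant along the trajectories of the holomorphic vector field $b z^k\,\partial/\partial z$, and then to exhibit a trajectory that accumulates at the origin. Since $P(0)=0$ while $P>0$ on the punctured disc $\Delta_\epsilon^\ast:=\{0<|z|<\epsilon\}$ (conditions (1)--(2) of Theorem \ref{T2}), a function that is constant along such a trajectory would have to vanish identically on it, which is absurd; this will force $b=0$. Concretely, suppose $b\neq0$ and that we are \emph{not} in the excluded case $k=1$, $\text{Re}(b)=0$, and consider the ODE $\dot z = b\,z^k$ (a reparametrization of the flow of $\text{Re}(b z^k\,\partial/\partial z)$). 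Because $P$ is real-valued we have $\partial P/\partial\bar z=\overline{\partial P/\partial z}$, so along any arc of a solution $z(s)$ lying in $\Delta_\epsilon^\ast$,
\[
\frac{d}{ds}P(z(s))= b\,z(s)^k P'(z(s))+\overline{b\,z(s)^k P'(z(s))}=2\,\text{Re}\big(b\,z(s)^k P'(z(s))\big)=0
\]
by \eqref{eqqtn1}. Hence $P$ is constant on each such arc, and the whole problem reduces to locating, in every remaining case, a solution that stays in $\Delta_\epsilon^\ast$ and tends to $0$.

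The next step is the elementary case analysis of the trajectories of $\dot z = b\,z^k$. For $k=0$ the solution $z(s)=bs$ lies in $\Delta_\epsilon^\ast$ for $s\in(0,\delta)$ and tends to $0$ as $s\downarrow 0$. For $k=1$ the solution $z(s)=z_0 e^{bs}$ has modulus $|z_0|\,e^{(\text{Re}\,b)s}$, which tends to $0$ as $s\to+\infty$ when $\text{Re}\,b<0$ and as $s\to-\infty$ when $\text{Re}\,b>0$; the only other possibility, $\text{Re}\,b=0$, gives circles and is precisely the excluded case (and is exactly when $P=P(|z|)$ is compatible). For $k\ge 2$ the substitution $\zeta=z^{1-k}$ linearizes the equation to $\dot\zeta=(1-k)b$, so $\zeta(s)=\zeta_0+(1-k)bs$ and $|z(s)|=|\zeta(s)|^{-1/(k-1)}\to 0$ as $s\to+\infty$. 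In each case, restricting $s$ to a suitable one-sided neighborhood of the relevant endpoint keeps the curve inside $\Delta_\epsilon^\ast$ (it never hits $0$ for $k\ge1$, $0$ being the equilibrium) while $z(s)\to0$.

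Finally, along such a curve $P(z(s))\equiv c$ for some constant $c$; letting $z(s)\to 0$ and using continuity of $P$ with $P(0)=0$ forces $c=0$, contradicting $P>0$ on $\Delta_\epsilon^\ast$. Therefore $b=0$, which is the assertion of the lemma.

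The main obstacle, and essentially the only point requiring care, is the middle step: one must check that in each case a genuine trajectory enters and remains in the punctured disc all the way to the origin. For $k\ge 2$ the straight-line image $\zeta(s)$ may first approach $0$ before escaping to $\infty$, so one should take $s$ large enough that $|z(s)|$ is already small and monotonically decreasing before restricting the parameter interval. The structural reason the configuration $k=1$, $\text{Re}(b)=0$ must be set aside is transparent from this analysis: it is exactly the case in which every orbit of $b z^k\,\partial/\partial z$ is a closed curve, so no orbit accumulates at $0$ and the contradiction mechanism breaks down.
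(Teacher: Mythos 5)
Your argument is correct and is essentially the paper's own proof: the three curves you obtain as trajectories of $\dot z = b z^k$ (the line $bs$ for $k=0$, the exponential $z_0e^{bs}$ for $k=1$ with $\operatorname{Re}b\neq 0$, and $\zeta(s)^{-1/(k-1)}$ with $\zeta$ affine in $s$ for $k\ge 2$) are exactly the curves $bt$, $ce^{bt}$, and $\sqrt[-\ell]{c-\ell bt}$ used in the paper's Cases (i)--(iii), along which $u(t)=P(\gamma(t))$ is shown constant and then forced to vanish by letting $\gamma(t)\to 0$. The only presentational difference is your explicit flow interpretation and linearizing substitution, versus the paper's direct choice of $c$ with $c-\ell bt\notin[0,+\infty)$ to fix the branch of the root.
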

\begin{proof}
We consider three following cases.
\smallskip

\underbar{Case (\romannumeral1)}: $k=0$. Let $u(t):=P(bt)$, $t\in (-\delta,+\delta)$ for some $\delta>0$. It follows
from (\ref{eqqtn1}) that $u'(t)\equiv 0$ on $(-\delta,+\delta)$, thus $u(t)\equiv u(0)=0$ on
$(-\delta,+\delta)$. Impossible.
\smallskip

\underbar{Case (\romannumeral2)}: $k=1$. Assume momentarily that $b_1=\text{Re}b <0$.
For each $ c \in \mathbb C^*$ let $u(t):=P(c e^{bt})$ for all $t\geq t_0$
with $t_0>0$ sufficiently large. It follows by (\ref{eqqtn1}) that $u'(t)\equiv 0$ on $(t_0,+\infty)$. Hence $u(t)\equiv 0 $ and consequently $P\equiv 0$ on $|z|<\epsilon_0$, absurd.
\smallskip

\underbar{Case (\romannumeral3)}:  $k=\ell+1\geq 1$. Choose $c\in \mathbb C$ such that
$c-\ell bt\in \mathbb C\setminus [0,+\infty) $
for  every $t\in (-\infty,+\infty)$.
 Let $\gamma(t):=\sqrt[-\ell]{c-\ell bt}=\sqrt[-\ell]{|c-\ell bt|} e^{-i arg (c-\ell bt)/\ell}$, $0<arg (c-\ell bt)<2\pi$.
Let $u(t):= P(\gamma(t))$. It follows from (\ref{eqqtn1}) that $u'(t)\equiv 0$ on $(t_0,+\infty)$,
for some $t_0>0$ sufficiently large,  and therefore $u(t)$ is constant on $(t_0,+\infty)$.
Since $\lim_{t\to +\infty} u(t)=P(0)=0$, $P(\gamma(t))\equiv 0$ for all $t>t_0$,
which is again impossible.
\end{proof}

\subsection{Holomorphic tangent vector fields: Proof of Theorem \ref{T2}}
The CR hypersurface germ $(M,0)$ at the origin in $\CC^2$ under consideration is defined by the equation $\rho(z_1, z_2) = 0$ where
$$
\rho (z_1, z_2) = \text{Re }z_1 + P(z_2) + (\text{Im }z_1)\ Q(z_2, \text{Im }z_1) = 0,
$$
where $P, Q$ are $\mathcal{C}^\infty$ smooth functions satisfying the three conditions specified in the hypothesis of Theorem 3, stated in Section 2.  Recall that $P$ vanishes to infinite order at $z_2=0$ in particular.

Then consider a holomorphic vector field $H=h_1(z_1,z_2)\frac{\partial}{\partial z_1}+h_2(z_1,z_2)\frac{\partial}{\partial z_2}$ defined on a neighborhood of the origin. We only consider $H$ that is tangent to $M$, which means that they satisfy the identity
\begin{equation}\label{eq221}
(\text{Re } H) \rho(z)=0,\; \forall z \in M.
\end{equation}

The goal is to characterize all such $H$.
\bigskip

Since
\begin{equation*}
\begin{split}
 \rho_{z_1}(z_1,z_2)&= \frac{1}{2} +\frac{1}{2i}Q(z_2, \text{Im } z_1)+
\text{Im } z_1Q_{z_1}(z_2, \text{Im} z_1),\\
\rho_{z_2}(z_1,z_2)&= P'(z_2)+\text{Im } z_1 Q_{z_2}(z_2, \text{Im} z_1).
\end{split}
\end{equation*}
the equation (\ref{eq221}) is re-written as
\begin{equation}\label{eq23}
\begin{split}
\text{Re} \Big [&\big( \frac{1}{2} +\frac{1}{2i}Q(z_2, \text{Im } z_1)+
\text{Im } z_1Q_{z_1}(z_2, \text{Im } z_1)\big)h_1(z_1,z_2)+\\
&\qquad +(P'(z_2)+\text{Im } z_1Q_{z_2}(z_2, \text{Im } z_1)) h_2(z_1,z_2) \Big ]=0,
\end{split}
\end{equation}
for all $(z_1,z_2)\in M$.

Since $(it-P(z_2)-tQ(z_2,t), z_2)\in M$ for any $t \in \RR$ with $|t|<\delta$, the equation again
takes the new form:
\begin{equation}\label{eq24}
\begin{split}
&\text{Re} \Big [\big(\frac{1}{2}+\frac{1}{2i}Q(z_2,t)+
tQ_{z_1}(z_2,t)\big) h_1(it-P(z_2)-tQ(z_2,t)), z_2)\\
&+\big(P'(z_2)+tQ_{z_2}(z_2,t)\big)h_2(it-P(z_2)-tQ(z_2,t),z_2) \Big ]=0.
\end{split}
\end{equation}
Expand $h_1$ and $h_2$ into the Taylor series at the origin so that
$$
h_1(z_1,z_2)=\sum\limits_{j,k=0}^\infty a_{jk} z_1^j z_2^k
\text{ and }
h_2(z_1,z_2)=\sum\limits_{j,k=0}^\infty b_{jk} z_1^jz_2^k.
$$
Note that $a_{00}=b_{00}=0$ since $h_1(0,0)=h_2(0,0)=0$.

Notice that we may choose $t=\alpha P(z_2)$ in (\ref{eq24})
(with $\alpha \in \mathbb R$ to be chosen later). Then one gets
\begin{equation}\label{eq27}
\begin{split}
\text{Re} \Big [\big(\frac{1}{2}&+\frac{1}{2i}Q(z_2,\alpha P(z_2))+
\alpha P(z_2)Q_{z_1}(z_2,\alpha P(z_2))\big)\times \\
& \quad h_1\big(i\alpha P(z_2)- P(z_2)- \alpha P(z_2)Q(z_2,\alpha P(z_2)), z_2\big)\\
&+\big(P'(z_2)+ \alpha  P(z_2) Q_{z_2}(z_2,\alpha  P(z_2))\big)\times\\
& \quad h_2(i \alpha  P(z_2)- P(z_2)-\alpha  P(z_2)Q(z_2,\alpha P(z_2)),z_2)
 \Big ]=0.
\end{split}
\end{equation}
for all $z_2$ with $|z_2|<\epsilon_0$, for some positive $\epsilon_0$ sufficiently small.
\medskip

We now prove that $h_1\equiv 0$ on a neighborhood of $(0,0)$ in $\mathbb C^2$.
\medskip

Assume the contrary that $h_1\not \equiv 0$. Then there exist non-negative integers
$j,k$ such that $a_{jk}\ne 0$ and the largest term in
\begin{equation*}
\begin{split}
&\text{Re}\Big [\Big (\frac{1}{2}+\frac{1}{2i}Q(z_2,\alpha  P(z_2))
+\alpha P(z_2)Q_{z_1}(z_2,\alpha P(z_2))\Big) \times \\
& h_1(i\alpha P(z_2)-P(z_2)-\alpha P(z_2) Q(z_2,\alpha  P(z_2)), z_2)\Big ]
\end{split}
\end{equation*}
is $\text{Re}\Big [\dfrac{1}{2}a_{jk}(i\alpha -1)^jz_2^k(P(z_2))^j\Big]$, where the ``largest'' is measured in terms of the speed of growth.
We note that in the case $k=0$ and $\text{Re} a_{j0}=0$, $\alpha $ can be chosen in such a way that
$\text{Re}(a_{j0}(i\alpha-1)^j) \ne 0 $. Therefore there are nonnegative integers $m,n$ such that $b_{mn}\ne 0$ and that the biggest term in
\begin{equation*}
\begin{split}
\text{Re}\Big [&\big(P'(z_2)+ \alpha P(z_2)Q_{z_2}(z_2,\alpha P(z_2))\big)\\
&h_2(i\alpha P(z_2)-P(z_2)-\alpha P(z_2)Q(z_2,\alpha P(z_2)),z_2)\Big ]
\end{split}
\end{equation*}
is $\text{Re}\Big[ b_{mn}(i\alpha -1)^mz^n_2(P'(z_2)
+\alpha P(z_2) Q_{z_2}(z_2, \alpha P(z_2)))(P(z_2))^m \Big ]$ for some $m,n$
with $b_{mn}\ne 0$. By (\ref{eq27}) we get
\begin{equation}\label{eq28}
\begin{split}
\text{Re } &\Big[\frac{1}{2} a_{jk}(i\alpha -1)^j(P(z_2))^jz_2^k+
 b_{mn}(i\alpha -1)^mz^n_2\times \\
& (P'(z_2)+\alpha P(z_2)Q_{z_2}(z_2, \alpha P(z_2))) (P(z_2))^m  \Big ]=o(P(z_2)^j|z_2|^k),
\end{split}
 \end{equation}
for all $|z_2|<\epsilon_0$. Observe that $j> m$. Note also that, if $k=0$
and $\text{Re}(a_{j0})\ne 0$, then letting $\alpha =0$ in (\ref{eq28})
we get $\text{Re}(a_{j0}+b_{0m}z_2^n P'(z_2)/P^{j-m}(z_2)) \to 0$ as $z_2\to 0$,
which is not possible because of Lemmas \ref{l1} and \ref{l22}.
Hence, we may assume that $\text{Re}a_{j0}=0$ for the case $k=0$.

We now divide the argument into two cases as follows:
\smallskip

{\bf Case 1.} {\boldmath $m=0$.}
In addition to this condition, if $n>1$, or if $n=1$ and $\text{Re}(b_{01})\ne 0$,
then (\ref{eq28}) contradicts Lemma \ref{l22}. Therefore, we may assume that $n=1$ and $\text{Re} b_{01}=0$. Choose $\alpha_1,\alpha_2\in \mathbb R$ with
$\alpha_1\ne \alpha_2$ such that $(\ref{eq28})$ holds for $\alpha=\alpha_\ell\; (\ell=1,2)$; thus one obtains two equations.
Subtracting one from the other yields:
\begin{equation*}
\begin{split}
f(z_2):=&\text{Re}\Big [ a_{jk}((i\alpha_1 -1)^j-(i\alpha_2 -1)^j)z_2^k+\\
&b_{01}z_2 \dfrac{\alpha_1Q_{z_2}(z_2, \alpha_1 P(z_2))-\alpha_2
Q_{z_2}(z_2, \alpha_2 P(z_2))}{P^{j-1}(z_2)} \Big ]
=o(|z_2|^k)
\end{split}
\end{equation*}
for every $z$ satisfying $0<|z|<\epsilon_0$.

If $j=1$ then, taking $\lim\limits_{\delta \to  0^+} \dfrac{1}{\delta^k}f(\delta z_2)$,  we obtain
$$
\text{Re }\Big [ i a_{1k}z_2^k+ b_{01} z_2 \psi(z_2)\Big ] =0,
$$
where $\psi$ is a homogeneous polynomial of degree $k-1$. Note that this identity implies that $k\not= 0$. So $k-1>0$.  Now, the same identity says that the homogenous polynomial $\psi$ must contain $c z^{k-1}$.  But this is impossible, since $\psi(z_2)$ comes from $Q_{z_2}(z_2,0)$ which has no harmonic terms.
\medskip

Now we consider the case $j>1$. Taking $\lim\limits_{\delta \to  0^+} \dfrac{1}{\delta^k}f(\delta z_2)$  we obtain
\begin{equation}\label{eq30}
 \text{Re}\Big [ a_{jk}((i\alpha_1 -1)^j-(i\alpha_2 -1)^j)z_2^k+
b (\alpha_1^\ell-\alpha_2^\ell) z_2^k\Big ] =0,
\end{equation}
where $b\in \mathbb C^*$ and $\ell \geq 1$ are both independent of $\alpha_1$ and $\alpha_2$.
Note that $\ell\geq 2$ for the case $k=0$. Indeed, suppose otherwise that $k=0$ and $\ell=1$.
Then $\lim_{z_2\to 0}\text{Re}\big(b_{01}z_2 \frac{Q_{z_2}(z_2,0)}{P^{j-1}(z_2)}\big) =a$,
where $0\neq a\in \mathbb R$. This contradicts Lemma \ref{l8}.
\smallskip

It follows by (\ref{eq30}) that
$$
a_{jk}((i\alpha_1 -1)^j-(i\alpha_2 -1)^j)+
b (\alpha_1^\ell-\alpha_2^\ell) =0$$
for $k\geq 1$,
and
$$ \text{Re}\Big[a_{j0}((i\alpha_1 -1)^j-(i\alpha_2 -1)^j)+
b (\alpha_1^\ell-\alpha_2^\ell)\Big] =0$$
for $k=0$. Since $\alpha_1$ can be arbitrarily chosen in $\mathbb R$ and note that
$\text{Re } (a_{j0})=0$, taking the $N$-th derivative of  both sides of above equations
with respect to $\alpha_1$ at $\alpha_1=0$, where $N=1$ if $\ell\geq 2$ and $N=2$ if $\ell=1$, we obtain that $a_{jk}=0$, which is absurd.
\medskip

\noindent
{\bf Case 2.} {\boldmath $m\geq 1$.}
If $n=1$, then the number $\alpha$ can also be chosen such that
$\text{Re }(b_{m1}(i\alpha-1)^m)\ne 0$. Therefore, $(\ref{eq28})$ contradicts
Lemma \ref{l3}, and thus $h_1\equiv 0$ on a neighborhood of $(0,0)$ in $\mathbb C^2$.
\medskip

Since $h_1\equiv 0$, it follows from (\ref{eq24}) with $t=0$ that
$$
\text{Re }\Big [ \sum_{m,n=0}^\infty  b_{mn} z^n_2P'(z_2)\Big ]=0,
$$
for every $z_2$ satisfying $|z_2|<\epsilon_0$, for some $\epsilon_0>0$ sufficiently small. By Lemmas \ref{l6} and \ref{l7}, we conclude that $b_{mn}=0$ for every
$m,n\ge 0$ except the case that $m=0$ and $n=1$. In this last case $b_{01}=i \beta$ for some nonzero real number $\beta$ and
$P$ is rotationally-symmetric. Moreover, (\ref{eq24}) yields that
$\text{Re }(i z_2Q_{z_2}(z_2,t))=0$ for every $z$ with $|z|<\epsilon_0$ and $t$ with $-\delta_0<t<\delta_0$, for sufficiently small positive real constants
$\epsilon_0$ and $\delta_0$.
This of course implies that $Q(z_2,t)$ is radially symmetric in $z_2$
by Lemma \ref{l6}.

Altogether, the proof of Theorem \ref{T2} is complete. \hfill $\Box\;$

\end{document}